\title{Preconditioned Low-Rank Riemannian Optimization for Symmetric Positive Definite Linear Matrix Equations}
\author{Ivan Bioli\thanks{Department of Mathematics and Department of Civil Engineering and Architecture, University of Pavia, Italy (\href{mailto:ivan.bioli@unipv.it}{ivan.bioli@unipv.it}).} \and Daniel Kressner\thanks{Institute of Mathematics, EPFL, 1015 Lausanne, Switzerland (\href{mailto:daniel.kressner@epfl.ch}{daniel.kressner@epfl.ch})} \and Leonardo Robol\thanks{Department of Mathematics, University of Pisa, Italy (\href{mailto:leonardo.robol@unipi.it}{leonardo.robol@unipi.it}). 
}}
\begin{document}

\maketitle

\begin{abstract}
This work is concerned with the numerical solution of large-scale symmetric positive definite matrix equations of the form
$A_1XB_1^\top + A_2XB_2^\top + \dots + A_\ell X B_\ell^\top = F$, as they arise from discretized partial differential equations and control problems. One often finds that $X$ admits good low-rank approximations, in particular when the right-hand side matrix $F$ has low rank.
For $\ell \le 2$ terms, the solution of such equations is well studied and effective low-rank solvers have been proposed, including Alternating Direction Implicit (ADI) methods for Lyapunov and Sylvester equations. 
For $\ell > 2$, several existing methods try to approach $X$ through combining a classical iterative method, such as the conjugate gradient (CG) method, with low-rank truncation. In this work, we consider a more direct approach that approximates $X$ on manifolds of fixed-rank matrices through Riemannian CG. 
One particular challenge is the incorporation of effective preconditioners into such a first-order Riemannian optimization method. We propose several novel preconditioning strategies, including a change of metric in the ambient space, preconditioning the Riemannian gradient, and a variant of ADI on the tangent space. Combined with a strategy for adapting the rank of the approximation, the resulting method is demonstrated to be competitive for a number of examples representative for typical applications.
\end{abstract}

\sloppy

\section{Introduction}

This paper is concerned with the numerical solution of multiterm matrix equations of the form
\begin{equation}
  \label{eq:multiterm_sylv}
  A_1XB_1^\top + A_2XB_2^\top + \dots + A_\ell X B_\ell^\top = F,
\end{equation}
where $A_i\in\Rmm$, $B_i\in\Rnn$ are known coefficient matrices, and
$F\in\Rmn$ is a known right-hand side.
This equation is equivalent to the linear system
\begin{equation} \label{eq:linearsystem}
 (B_1 \otimes A_1 + B_2 \otimes A_2 + \cdots + B_\ell \otimes A_\ell) \mathsf{vec}(X) = \mathsf{vec}(F),
\end{equation}
where $\mathsf{vec}: \mathbb R^{m \times n} \to \mathbb R^{mn}$ stacks the columns of a matrix into a long vector and 
$\otimes$ denotes the usual Kronecker product. Such matrix equations appear in the context of discretized partial differential equations (PDEs) on
tensorized domains, parametric and stochastic PDEs, bilinear and stochastic control; see~\cite{BennerBreiten2013,BennerDamm2011,KressnerTobler2011,Palitta2021,PalittaSimoncini2016,ZhangNagy2018} and the references therein.

For $\ell = 2$, the matrix equation~\eqref{eq:multiterm_sylv} becomes a (generalized) Sylvester equation and many specialized solvers have been developed for this case; see~\cite{simoncini_computational_2016} for an overview. This includes the Bartels-Stewart method~\cite{Bartels1972,Gardiner1992a}, which is suitable for dense coefficients and requires only $\mathcal O(m^3 + n^3)$ operations.
For $\ell > 2$, the development of such solvers is significantly more challenging. For example, there is no meaningful extension of the Bartels-Stewart method, unless rather strong conditions are met~\cite{Chen2020}. Hence, under \emph{no} additional assumptions on the data, to this date the only viable approach for solving~\eqref{eq:multiterm_sylv} is to apply a standard linear systems solver to~\eqref{eq:linearsystem}, which requires $\mathcal O(m^3 \cdot n^3)$ operations.

To address the case $\ell > 2$ efficiently, additional assumptions on the data need to be imposed. In particular, in the applications mentioned above it is frequently the case that the right-hand side $F$ has low-rank. Although there is only limited theoretical insight on this matter~\cite{BennerBreiten2013,Jarlebring2018,KressnerUschmajew2016}, this often implies that $X$ can be well approximated by a low-rank matrix. By directly aiming at an approximate low-rank solution to~\eqref{eq:multiterm_sylv}, without computing the exact solution first, one hopes to obtain an efficient and reasonably accurate solver. One possible approach is to apply a standard Krylov subspace method (like CG, GMRES or BiCGSTAB) to the linear system~\eqref{eq:linearsystem}, rephrase the method in terms of a matrix iteration and apply low-rank truncation to the iterates; see~\cite{BallaniGrasedyck2013,BennerBreiten2013,KressnerTobler2011} for examples. Based on (block) matrix-vector products, these methods directly benefit from sparse or low-rank coefficient matrices, allowing one to address very large-scale equations for which the full matrix $X$ could not even be stored in memory. Their non-stationary nature complicates the analysis of such truncated Krylov subspace methods; see~\cite{PalittaKuerschner2021,SimonciniHao2023} recent progress. All methods benefit from the availability of a preconditioner for which the inverse can be cheaply applied to a low-rank matrix. In particular, this is the case for preconditioners of the form $D\otimes E$; applying its inverse $D^{-1} \otimes E^{-1}$ preserves the rank.
Such Kronecker product preconditioner are often constructed by averaging the terms in~\eqref{eq:linearsystem} or solving an approximation problem; see, e.g.,~\cite{Ullmann2010}. A more elaborate choice of preconditioner takes the form
$D\otimes A + B \otimes E$, which can be constructed by, e.g., choosing the first two terms in~\eqref{eq:linearsystem}. Applying its inverse corresponds to solving a generalized Sylvester equation, which is usually executed inexactly, for example, by a few steps of the Alternating Direction Implicit (ADI) iteration; see, e.g.,~\cite{BennerBreiten2013}. For a sufficiently good preconditioner, one can also combine a fixed point iteration (instead of a Krylov subspace method) with low-rank truncation to arrive at a competitive method; see~\cite{Damm2008,Shank2016} for examples.
With stronger assumptions on the data (such as low-rank commutators); more effective solution strategies can be developed; see~\cite{BennerBreiten2013,Jarlebring2018,Richter1993} for examples.

In the following, we restrict ourselves to the symmetric positive definite (SPD) case, that is, the system matrix in~\eqref{eq:linearsystem} is SPD or, equivalently,
the linear operator
\begin{equation}
  \label{eq:operator_sylv}
  \calA: \Rmn\to\Rmn, \qquad \calA X = A_1XB_1^\top + A_2XB_2^\top + \dots + A_\ell X B_\ell^\top
\end{equation}
is SPD. In this case, it is well known that~\eqref{eq:multiterm_sylv} is equivalent to minimizing $\frac{1}{2}\inner{\calA X}{X} - \inner{X}{F}$, where $\langle \cdot, \cdot \rangle$ denotes the standard matrix inner product.
 One then obtains a low-rank approximation 
to $X$ by restricting the minimization to \[\M_r = \left\{X\in\Rmn\,:\,\rank(X) = r\right\}, \]
for some fixed choice of $r \ll m,n$. Noting that $\M_r$ is an embedded submanifold of $\Rmn$, this leads to the Riemannian optimization~\cite{boumal_introduction_2023} problem
\begin{equation}
  \label{eq:spd_optim_problem}
  \min_{X\in\M_r } f(X) \defas \frac{1}{2}\inner{\calA X}{X} - \inner{X}{F}.
\end{equation}
For Lyapunov equations ($\ell=2$), Vandereycken and Vandewalle~\cite{vandereycken_riemannian_2010} have developed
a Riemannian trust-region (RTR) approach for addressing~\eqref{eq:spd_optim_problem}. They also construct a highly efficient preconditioner for iteratively solving the second-order model in every step of RTR.
For $\ell > 2$, alternating optimization and greedy rank-one strategies have been discussed in~\cite{BreitenRingh2019,KressnerSirkovic2015}, which allow the incorporation of preconditioners indirectly through a preconditioned residual. In~\cite{SuttiVandereycken2021}, combinations of multigrid/multilevel methods with Riemannian optimizations are proposed. The direct incorporation of preconditioners into first-order Riemannian optimization methods on low-rank tensor manifolds has been discussed for higher-dimensional generalization of the Lyapunov case in~\cite{KressnerSteinlechner2016}.


The goal of this work is to develop a suitably preconditioned first-order Riemannian optimization method for solving SPD multiterm matrix equations~\eqref{eq:multiterm_sylv}. We address the optimization problem~\eqref{eq:spd_optim_problem} using the Riemannian Nonlinear Conjugate Gradient (R-NLCG) method~\cite{sato_riemannian_2022}. This approach inherently prevents the rank growth often observed in truncated CG methods \cite{KressnerSirkovic2015,KressnerTobler2011}, although it complicates the efficient incorporation of preconditioners. We interpret preconditioning as a modification of the Riemannian metric on $\M_r$, achievable through two strategies: (1) altering the inner product of the embedding space, or (2), implicitly, by preconditioning the Riemannian gradient. We show feasibility of both alternatives for simpler preconditioners of the form $D \otimes E$. Additionally, the combined use of these two preconditioning strategies allows us to extend the Lyapunov-like preconditioners developed in \cite{vandereycken_riemannian_2010,KressnerSteinlechner2016} to generalized Sylvester preconditioners of the form $D\otimes A + B \otimes E$. For the latter, we also introduce an approximate preconditioner based on a variant of the ADI method applied on the tangent space. Finally, we develop a rank-adaptive algorithm~\cite{gao_riemannian_2022, uschmajew_line-search_2014} by alternating between fixed-rank Riemannian optimization and rank updates.

The rest of this paper is organized as follows. In \cref{sec:review_riemannianopt}, we briefly review the structure of the manifold of fixed-rank matrices and the (preconditioned) R-NLCG method. \Cref{sec:preconditioners} is dedicated to discussing the efficient application of Riemannian preconditioners for SPD linear matrix equations. The rank-adaptive algorithm is introduced in \cref{sec:rlcg_rankadaptivity}. Finally, \cref{sec:numexp} presents numerical experiments comparing the performance of the proposed algorithms against existing methods.


\section{Brief overview of low-rank Riemannian optimization}
\label{sec:review_riemannianopt}
In this section, we give a brief tour on the tools needed for Riemannian optimization on $\M_r$;
see, e.g.,~\cite{boumal_introduction_2023} for more details.
These tools depend on the choice of inner product on $\Rmn$. We provide the well-known explicit expressions for the case of the standard inner product $\langle \cdot, \cdot \rangle$ in the following and extend them to a more general setting in \cref{sec:geometry_metricEXD}.

By the singular value decomposition (SVD), any matrix $X \in \M_r$ can be written as 
$X = U\Sigma V^\top$, where $U \in \mathbb R^{m\times r}$, $V \in \mathbb R^{n\times r}$ have orthonormal columns and $\Sigma \in \mathbb R^{r\times r}$ is diagonal with positive diagonal entries. The tangent space at $X$ takes the form
\begin{equation}
  \label{eq:tangent_space_factor_repr}
  \begin{split}
    \Tang_X\M_r
    = \Big\{& UMV^\top + U_pV^\top+UV_p^\top\,:\\
    & M\in\R^{r\times r}, U_p\in\R^{m\times r}, V_p\in\R^{n\times r}, U^\top U_p = 0, V^\top V_p = 0\Big\}.
  \end{split}
\end{equation}
This allows one to store $X$ efficiently in terms of its factors $(U,\Sigma,V)$ as well as a tangent vector $\xi\in\Tang_X\M_r$ in terms of the coefficients $(M,U_p,V_p)$.

\subsection{Embedded geometry of fixed-rank matrices}
\label{sec:geometry-fixed-rank}

\paragraph*{Tangent space projection}
Given an arbitrary matrix $Z\in\Rmn$, the tangent space projection
$\Proj_X$ maps $Z$ orthogonally, with respect to the choice of inner product on $\Rmn$, to $\Tang_X\M_r$.

In the standard inner product, an explicit expression for $\Proj_X(Z)$ is given by:
\begin{equation}
\label{eq:proj_formula}
\begin{split}
  &\Proj_X(Z) = Z - \Pp_U^\perp Z \Pp_V^\perp = UMV^\top + U_pV^\top+UV_p^\top, \\
  & \text{with} \quad M = U^\top Z V, \qquad U_p = ZV - UM, \qquad V_p = Z^\top U - VM^\top,
  \end{split}
\end{equation}
where $\Pp_U^\perp = I - UU^\top$ and $\Pp_V^\perp = I - VV^\top$.


\paragraph*{Transporter}
To map (transport) an element from one tangent space to another, we use orthogonal projection:
\begin{equation*}
  \T_{Y\leftarrow X} = \left. \Proj_Y\right\rvert_{\Tang_X\M_r}: \Tang_X\M_r \to \Tang_Y\M_r, \qquad \forall X,Y\in\M_r.
\end{equation*}
Based on~\eqref{eq:proj_formula}, an efficient implementation, using $\calO((m + n)r^2)$ flops, is described in \cite[Algorithm~6]{vandereycken_low-rank_2013}.

\paragraph*{Retraction} To map an element $X + \xi$ for $\xi \in \Tang_X\M_r$ back to the manifold, we make use of the metric projection retraction
\begin{equation}
  \label{eq:metric_proj_retr}
  \Retr_X(\xi) = \argmin_{Y\in\M_r} \| X+\xi-Y \|,
\end{equation}
where $\|\cdot\|$ denotes the norm induced by the inner product on $\Rmn$.

The norm induced by the standard inner product is the Frobenius norm $\frobnorm{\cdot}$, which allows one to 
compute~\eqref{eq:metric_proj_retr} by performing a truncated SVD of $X+\xi$. Using that $X+\xi$ has rank at most $2r$~\cite[\S~7.5]{boumal_introduction_2023}, this computation can be carried out in $\calO((m + n)r^2)$ operations, .

\paragraph*{Riemannian gradient and Hessian} Because $\M_r$ is embedded in $\Rmn$, the Riemannian gradient
of a smooth map $f:\M_r\to\R$  is obtained from projecting the 
Euclidean gradient of any smooth extension $\barf$: $\grad f(X) = \Proj_X(\nabla\barf(X)) \in \Tang_X\M_r$.
Similarly, the Riemannian Hessian $\Hess f(X)[\xi]: \Tang_X\M_r \to \Tang_X\M_r$ 
takes the form
\begin{equation}
  \label{eq:hess_submanifolds_curvature}
  \Hess f(X)[\xi] = \Proj_X\big(\Hess\barf(X)[\xi]\big) + \calD_\xi\big(\Proj_X^\perp(\nabla \barf(X))\big),
\end{equation}
where $\Hess\barf$ denotes the Euclidean Hessian of $\barf$ and $\calD_\xi$ is the differential of $X\mapsto\Proj_X$ at $X$ along $\xi$ \cite[Corollary~5.47]{boumal_introduction_2023}. The second term $\calD_\xi(\Proj_X^\perp(\nabla\barf(X)))$ 
is called the \emph{curvature term}, as it can be related to the curvature of the manifold~\cite[\S~6]{absil_all_2009}.
The presence of this term may render the Riemannian Hessian indefinite even when 
the Euclidean Hessian is positive definite.

\subsection{Riemannian Nonlinear Conjugate Gradient (R-NLCG)}

Given the ingredients defined above, a general line-search Riemannian optimization method~\cite{boumal_introduction_2023} takes the form
\[
  X_{k+1} = \Retr_{X_k}(\alpha_k \xi_k)
\]
for a search direction $\xi_k\in\Tang_{X_k}\M_r$ and a suitable step size $\alpha_k>0$. We specifically consider the Riemannian Nonlinear Conjugate Gradient (R-NLCG), which extends the (Euclidean) nonlinear conjugate gradient method to Riemannian manifolds. R-NLCG determines the search direction by combining the negative Riemannian gradient with the previous search direction 
$\xi_{k-1}\in\Tang_{X_{k-1}}\M$:
\begin{equation} \label{eq:rnlcgsearch}
  \xi_k = -\grad f(X_k) + \beta_k \T_{X_k\leftarrow X_{k-1}}(\xi_{k-1})
\end{equation}
for some $\beta_k\in\R$. A reasonable search direction should satisfy $\inner{\grad f(X_k)}{\xi_k}_{X_k} < 0$; we simply 
set $\xi_k = -\grad f(X_k)$ if this condition is violated by~\eqref{eq:rnlcgsearch}.
For choosing the step size $\alpha_k$, we use Armijo's backtracking procedure adapted to Riemannian optimization, as described in~\cite[Definition~4.2.2]{absil_optimization_2008}.

A comprehensive survey of methods for choosing $\beta_k$ in~\eqref{eq:rnlcgsearch} is provided in \cite{sato_riemannian_2022}. For example, $\beta_k =0$ yields the Riemannian Gradient Descent (R-GD) method.
Based on preliminary numerical experiments, we have selected the modified Hestenes-Stiefel rule \cite[\S~6.2.2]{sato_riemannian_2022}; comparable performance was observed when employing the modified Polak-Ribiere rule \cite[\S~6.2.1]{sato_riemannian_2022}, or the modified Liu-Storey rule \cite[\S~6.2.3]{sato_riemannian_2022}. Denoting $g_k \defas \grad f(X_k)$, one chooses
  $\beta_k = \max(0, \min(\beta_k^{\mathsf{HS}}, \beta_k^{\mathsf{DY}}))$
with
\begin{equation*}
  \begin{split}
    \beta_k^{\mathsf{HS}} &= \frac{\norm{g_k}^2 - \innersmall{g_{k}}{\T_{X_k\leftarrow X_{k-1}}(g_{k-1})}}{ \innersmall{g_{k}}{\T_{X_k\leftarrow X_{k-1}}(\xi_{k-1})} - \innersmall{g_{k-1}}{\xi_{k-1}}},\\
    \beta_k^{\mathsf{DY}} &=  \frac{\norm{g_k}^2}{ \innersmall{g_{k}}{\T_{X_k\leftarrow X_{k-1}}(\xi_{k-1})} - \innersmall{g_{k-1}}{\xi_{k-1}}} 
  \end{split}
\end{equation*}
when using the standard inner product.


\subsection{Preconditioned Riemannian Optimization}

First-order line-search methods exhibit slow convergence if the (Riemannian) Hessian at the solution
is ill-conditioned. As observed in~\cite{KressnerSteinlechner2016,vandereycken_riemannian_2010}, an ill-conditioned operator $\calA$ in~\eqref{eq:operator_sylv} can be expected to lead to such a situation.  In these cases, it is crucial to employ a preconditioner. 
In principle, for the purpose of Riemannian optimization it suffices to define the preconditioner as an SPD operator $\p_X$ on the tangent space $\Tang_X\M$.
However, in the context of matrix equations, it is most natural to search for an SPD
preconditioner $\p: \Rmn \to \Rmn$ on the ambient space and let
\begin{equation} \label{eq:generalformprecond}
 \p_X = \Proj_X \circ \p \circ \Proj_X.
\end{equation}
It can be easily verified that
$
 \inner{\xi}{\eta}_{\p_X} = \inner{\xi}{\eta}_{\p}
$
for all $\xi,\eta\in\Tang_X\M_r$, that is, $\p$ and $\p_X$ induce the same metric on $\Tang_X\M_r$.

We will consider two different ways of incorporating preconditioners into R-NLCG:

(i) Given a (simple) preconditioner $\calB$ on the ambient space, one possibility is to replace the standard inner product on $\Rmn$ by the one induced by $\calB$;
see~\cite{kasai_low-rank_2016,mishra_riemannian_2016,ngo_scaled_2012} for examples in the context of low-rank optimization. This effects the following change of Riemannian gradient:
\begin{equation} \label{eq:riemanngradientwithrespecttoB}
  \grad_{\calB} f(X) := \Proj^\calB_X \nabla_\calB \barf(X) = \Proj^\calB_X \calB^{-1} \nabla \barf(X).
\end{equation}
where $\Proj^\calB_X$ denotes the $\calB$-orthogonal projection onto $\Tang_X$.
While conceptually simple, this approach may bear practical difficulties. Unless $\calB$ has Kronecker product structure (see \cref{sec:geometry_metricEXD} below), there are no simple formulas for the tools from \cref{sec:geometry-fixed-rank}. In particular, it is difficult to carry out the $\calB$-orthogonal projection onto the tangent space efficiently for general $\calB$.

(ii) Another way to use a preconditioner $\calP$ is to replace the Riemannian gradient of $f$ with respect to the standard inner product by the one with respect to $\inner{\cdot}{\cdot}_{\p_X}$:
\begin{equation} \label{eq:precondgradient}
  \p_X^{-1} \grad f(X).
\end{equation}
Such an approach can be viewed as a quasi-Newton method when deriving $\p_X$ from an approximation of the Riemannian Hessian; see~\cite{boumal_low-rank_2015,KressnerSteinlechner2016,vandereycken_riemannian_2010} for examples. 

As we will see below, it can sometimes be beneficial to combine both approaches: Use a simple but less effective Kronecker product preconditioner $\mathcal B$ to change the metric of the ambient space and, additionally, use a more effective (and more complicated) preconditioner $\p_X$ to modify the Riemannian gradient. 
%
%
%
%
The search direction for the correspondingly preconditioned R-NLCG then takes the form
\begin{equation*}
  \xi_k = -\p_{X_k}^{-1} \grad_{\calB} f({X_k}) + \beta_k \T_{X_k\leftarrow X_{k-1}}(\xi_{k-1}),
\end{equation*}
where
$\beta_k = \max(0, \min(\beta_k^{\mathsf{HS}}, \beta_k^{\mathsf{DY}}))$
with
\begin{equation}
  \label{eq:modif_hestenes_stiefel_precond}
  \begin{split}
    \beta_k^{\mathsf{HS}} & = \frac{\inner{g_k}{\p_{X_k}^{-1}g_k}_{\calB} - \inner{g_k}{\T_{X_k\leftarrow X_{k-1}}(\p_{X_{k-1}}^{-1}g_{k-1})}_{\calB}}{\inner{g_k}{\T_{X_k\leftarrow X_{k-1}}(\xi_{k-1})}_{\calB} - \inner{g_{k-1}}{\xi_{k-1}}_{\calB}}, \\
    \beta_k^{\mathsf{DY}} & = \frac{\inner{g_k}{\p_{X_k}^{-1}g_k}_{\calB}}{\inner{g_k}{\T_{X_k\leftarrow X_{k-1}}(\xi_{k-1})}_{\calB} - \inner{g_{k-1}}{\xi_{k-1}}_{\calB}}.
  \end{split}
\end{equation}
and $g_k \defas \grad_{\calB} f(X_k)$.
Note that one needs to apply $\p_{X_k}^{-1}$ to $\grad f(X_k)$ only once per iteration.

Observe that if $X_\star$ is a non-degenerate minimizer, then the Riemannian Hessian at $X_\star$ with respect to the metric $\inner{\cdot}{\cdot}_{\p_X}$ is given by $\Hess_{\p_{X}}f(X_\star) = \Hess_{\p_{X}} (f \circ \Retr_{X_\star})(0) = \p_{X_\star}^{-1} \Hess f(X_\star)$ \cite[Proposition~5.45]{boumal_introduction_2023} when $\calB$ is identity. Therefore, when $\p_{X_\star}$ captures
dominant parts of $\Hess f(X_\star)$ then one can expect that $\p_{X_\star}^{-1} \Hess f(X_\star)$ is well-conditioned, leading to rapid local convergence of R-NLCG.

\section{Riemannian preconditioning for multiterm matrix equations}
\label{sec:preconditioners}
This section discusses different choices of (Riemannian) preconditioners for SPD multiterm matrix equations. Using~\eqref{eq:hess_submanifolds_curvature}, it follows that the Riemannian Hessian of
$f(X) = \frac{1}{2}\inner{\calA X}{X} - \inner{X}{F}$ is given by
\[
  \Hess f(X)[\xi] =  \Proj_X(\calA \xi) + \calD_\xi\big(\Proj_X^\perp(\calA X - F)\big)
\]
Ignoring the second term, which becomes negligible close to a good approximation of the solution, it appears reasonable to build the preconditioner $\calP$ (defining $\calP_X$ as in~\eqref{eq:generalformprecond})
from identifying $1$--$2$ dominant terms or combining terms of $\calA$.
Depending on the application (see the experiments in \cref{sec:numexp}), this may take the form $EXD$, $AX + XB$, or $AXD + EXB$. In the following sections, we will discuss the implementation of $\calP$ in increasing order of difficulty. Finally, in \cref{sec:ADI}, we will develop a variant of ADI that leads to a cheaper (but still effective) alternative to the exact application of preconditioners of the form 
$AX + XB$ or $AXD + EXB$.

\subsection{Preconditioned inner product with \texorpdfstring{$\calB X = EXD$}{BX = EXD}}

\label{sec:geometry_metricEXD}

We first consider preconditioning by replacing the standard inner product with the one induced by $\calB X = EXD$ for SPD matrices $D,E$. Letting $D = C_D^\top C_D$ and $E = C_E^\top C_E$ denote Cholesky decompositions, the Cholesky decomposition of the matrix representation of $\calB$ is obtained:
\begin{equation} \label{eq:kronchol}
 D\otimes E = (C_D \otimes C_E)^\top (C_D \otimes C_E).
\end{equation}
Although the change of inner product does not affect which elements are contained in the manifold $\M$ or tangent spaces, it is computationally beneficial to choose different representations for these elements. The following weighted SVD~\cite{van_loan_generalizing_1976} is an important tool for this purpose; we include its proof for the sake of illustration.

\begin{proposition} \label{proposition:weightedsvd}
  Let $E\in\Rmm$, $D\in\Rnn$ be SPD. Given $Z  \in \Rmn$ there exists a decomposition
  $Z = \tU\tSigma \tV^\top$ called \emph{weighted SVD} such that
  $\tU\in\Rmm$ is $E$-orthogonal ($\tU^\top E \tU = I$),
  $\tV\in\Rnn$ is $D$-orthogonal ($\tV^\top D \tV = I$),
  and $\tSigma \in \Rmn$ is diagonal with the 
 diagonal entries $\tilde{\sigma}_1 \ge \tilde{\sigma}_2 \ge \cdots \ge \tilde{\sigma}_{\min\{m,n\}} \ge 0$ called \emph{weighted singular values}.
\end{proposition}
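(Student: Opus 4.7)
The plan is to reduce the weighted SVD to the classical SVD by a Cholesky change of variables, exactly mirroring how~\eqref{eq:kronchol} factors the Kronecker representation of $\calB$. First, I would factor $E = C_E^\top C_E$ and $D = C_D^\top C_D$, which is possible because $E$ and $D$ are SPD. Because $C_E$ and $C_D$ are nonsingular, the change of variables $Z \mapsto C_E Z C_D^\top$ is a bijection on $\Rmn$, and this is the bridge to the standard inner product.

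Next I would apply the ordinary SVD to the transformed matrix, writing $C_E Z C_D^\top = U \Sigma V^\top$ with orthogonal $U, V$ and diagonal $\Sigma$ with nonincreasing nonnegative entries. Undoing the transformation then gives
\begin{equation*}
Z \;=\; C_E^{-1} U \, \Sigma \, V^\top C_D^{-\top} \;=\; \bigl(C_E^{-1}U\bigr)\, \Sigma\, \bigl(C_D^{-1} V\bigr)^\top,
\end{equation*}
so the natural candidates are $\tU \defas C_E^{-1}U$, $\tV \defas C_D^{-1}V$, and $\tSigma \defas \Sigma$.

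It then remains to verify the weighted orthogonality conditions, which is the routine step: a direct computation using $C_E^\top C_E = E$ gives $\tU^\top E \tU = U^\top C_E^{-\top} C_E^\top C_E C_E^{-1} U = U^\top U = I$, and analogously $\tV^\top D \tV = I$. The diagonal form and ordering of $\tSigma$ are inherited immediately from the classical SVD.

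There is no serious obstacle here; the only subtlety is conceptual, namely recognizing that $E$- and $D$-orthogonality of $\tU, \tV$ is equivalent to ordinary orthogonality of $C_E \tU$ and $C_D \tV$, which is precisely what makes the Cholesky substitution the right reduction. One could alternatively phrase the construction in a Cholesky-free way via the generalized eigenvalue problems for $Z^\top E Z$ with weight $D^{-1}$ (and its transpose), but the Cholesky route is by far the shortest and most transparent.
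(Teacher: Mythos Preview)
Your proposal is correct and follows essentially the same approach as the paper: factor $E$ and $D$ via Cholesky, take the ordinary SVD of $C_E Z C_D^\top$, and set $\tU = C_E^{-1}U$, $\tV = C_D^{-1}V$, $\tSigma = \Sigma$. The paper's proof is in fact terser than yours, omitting the explicit verification of $\tU^\top E \tU = I$ that you spell out.
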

\begin{proof}
  Considering the Cholesky decompositions introduced above, let $\tilde Z = U\tSigma V^\top$ be the SVD of $\tilde Z := C_E Z C_D^\top$. Then $Z = \tU\tSigma \tV^\top$, where $\tU = C_E^{-1}U$ and $\tV = C_D^{-1}V$ are $E$-orthogonal and $D$-orthogonal, respectively.
\end{proof}

For $X\in \M_r$, only the first $r$ weighted singular values are positive and we can instead consider a thin weighted SVD of the form
\begin{equation} \label{eq:thinweightedSVD}
 X = \tU\tSigma \tV^\top, \quad \tU\in\Rmr, \quad \tV\in\Rnr, \quad \tU^\top E \tU = \tV^\top D \tV = I_r, \,\tSigma \in \Rrr.
\end{equation}
In the following, we represent $X\in \M_r$ by the triple $(\tU, \tSigma, \tV)$.

Using QR decompositions $\tU = U R_U$, $\tV = V R_V$ with invertible $R_U, R_V \in \Rrr$, we can insert the substitutions
$\tilde M = R_U^{-1} M R_V^{-\top}$, $\tilde U_p = E^{-1} U_p$, $\tilde V_p = D^{-1} V_p$  into~\eqref{eq:tangent_space_factor_repr} to derive the modified tangent space representation
\begin{equation}
  \label{eq:tangent_space_factor_repr_weighted}
  \begin{split}
    \Tang_X\M_r
    = \big\{& \tU\tM\tV^\top + \tU_p\tV^\top+\tU\tV_p^\top\,:\\
    & \tM\in\R^{r\times r}, \tU_p\in\R^{m\times r}, \tV_p\in\R^{n\times r}, \tU^\top E \tU_p = 0, \tV^\top D \tV_p = 0\big\}.
  \end{split}
\end{equation}
In the following, a tangent vector $\xi\in\Tang_X\M_r$ is represented as $\xi \reprby (\tM, \tU_p, \tV_p)$.

To determine the coefficients~\eqref{eq:modif_hestenes_stiefel_precond} of R-NLCG, one needs to compute the (preconditioned) inner product of two tangent space elements $\xi \reprby (\tM, \tU_p, \tV_p)$ and $\xi' \reprby (\tM', \tU_p', \tV_p')$:
 \begin{equation} \label{eq:innerprodtangentspace}
     \begin{split}
     \inner{\xi}{\xi'}_{\calB} &= \Big\langle{E\big(\tU\tM \tV^\top + \tU_p \tV^\top + \tU\tV_p^\top\big)D}, {\tU\tM'\tV^\top +\tU_p'\tV^\top+\tU\tV_p'^\top} \Big\rangle\\
     &= \innersmall{\tM}{\tM'} + \innersmall{E\tU_p}{\tU_p'} + \innersmall{D\tV_p}{\tV_p'}.
   \end{split}
 \end{equation}

We now derive extensions of the explicit formulas discussed in \cref{sec:geometry-fixed-rank} for the inner product induced by $\calB$.

\paragraph*{Tangent space projection} Given $X\in \M_r$ in the representation $(\tU, \tSigma, \tV)$ explained above, we define for arbitrary $Z\in\Rmn$ -- in analogy to~\eqref{eq:proj_formula} -- the element
\begin{equation}
\label{eq:proj_formula_weighted}
\begin{split}
  &\xi = Z - (I - \tU\tU^\top E) Z (I-D\tV\tV^\top) = \tU\tM\tV^\top + \tU_p\tV^\top+\tU\tV_p^\top, \\
  & \text{with} \quad \tM = \tU^\top EZD \tV, \quad
    \tU_p = ZD\tV - \tU\tM, \quad
    \tV_p = Z^\top E\tU - \tV\tM^\top.
\end{split}
\end{equation}
The first expression implies that $Z-\xi$ is $\calB$-orthogonal to the tangent space $\Tang_X\M_r$ because its range is $E$-orthogonal to $\tU$ and its co-range is $D$-orthogonal to $\tV$. The second expression matches~\eqref{eq:tangent_space_factor_repr_weighted}. Noting
that $\tilde U^T E \tU_p = \tU^T E ZD\tV - \tU^T E \tU\tM = \tM - \tM = 0$
and, analogously, $\tilde V^T D \tV_p = 0$, this implies $\xi \in \Tang_X\M_r$
with the representation $\xi \reprby (\tM, \tU_p, \tV_p)$.

In summary, we have verified that~\eqref{eq:proj_formula_weighted} gives $\xi = \Proj_X^{\calB}(Z)$.

\paragraph*{Retraction} For defining a suitable retraction, we use the following straightforward extension of the Eckart--Young theorem.
\begin{proposition} \label{proposition:weightedEY}
  For $Z \in \Rmn$ let $Z = \tU\tSigma\tV^\top$ be the weighted SVD from Proposition~\ref{proposition:weightedsvd}.
  For $r \le \min\{m,n\}$, let $\tU_r, \tV_r$ denote the first $r$ columns of $\tU,\tV$ and let $\Sigma_r = \mathsf{diag}(\tilde \sigma_1,\ldots,\tilde \sigma_r)$. Then
  \begin{equation}
    \Pp_{\M_r}^{\calB}(Z) := \tU_r \tSigma_r \tV_r^\top
  \end{equation}
  solves the minimization problem $\min\{ \norm{Z-Y}_{\calB}: Y\in\M_r \}$.
  \end{proposition}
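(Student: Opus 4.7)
The plan is to reduce the weighted problem to the standard Eckart--Young theorem via the Cholesky decompositions $D = C_D^\top C_D$ and $E = C_E^\top C_E$ already introduced around~\eqref{eq:kronchol}, exactly mirroring the reduction used in the proof of Proposition~\ref{proposition:weightedsvd}.

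First I would observe that the $\calB$-norm on $\Rmn$ can be rewritten in terms of the Frobenius norm after a change of variable. Specifically, for any $W\in\Rmn$,
\[
\norm{W}_{\calB}^2 = \inner{EWD}{W} = \inner{C_E W C_D^\top}{C_E W C_D^\top} = \frobnorm{C_E W C_D^\top}^2.
\]
Applying this with $W = Z - Y$ gives $\norm{Z-Y}_{\calB} = \frobnorm{C_E Z C_D^\top - C_E Y C_D^\top}$. Because $C_E$ and $C_D$ are invertible, the map $Y \mapsto \hat Y := C_E Y C_D^\top$ is a bijection on $\Rmn$ that preserves rank, so as $Y$ ranges over $\M_r$, $\hat Y$ also ranges over $\M_r$.

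The minimization in the statement is therefore equivalent to
\[
\min_{\hat Y\in\M_r} \frobnorm{C_E Z C_D^\top - \hat Y},
\]
which is the classical best low-rank approximation problem in Frobenius norm. By the standard Eckart--Young theorem its solution is obtained from the (standard) SVD of $C_E Z C_D^\top$. But this SVD is precisely $U \tSigma V^\top$ from the proof of Proposition~\ref{proposition:weightedsvd}, so the Frobenius-optimal truncation is $U_r \tSigma_r V_r^\top$.

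Finally, I would transform back: the corresponding optimal $Y$ is
\[
C_E^{-1} U_r \tSigma_r V_r^\top C_D^{-\top} = \tU_r \tSigma_r \tV_r^\top = \Pp_{\M_r}^{\calB}(Z),
\]
using $\tU = C_E^{-1} U$ and $\tV = C_D^{-1} V$. This proves the claim. There is no real obstacle here: the only points requiring a line of justification are that the $\calB$-norm pulls back to the Frobenius norm under the Cholesky change of variables and that this change of variables preserves the rank constraint; both are immediate from invertibility of $C_E$ and $C_D$.
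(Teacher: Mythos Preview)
Your proof is correct and follows essentially the same route as the paper: reduce to the standard Eckart--Young theorem via the Cholesky change of variables $Y\mapsto C_E Y C_D^\top$, noting that this turns $\norm{\cdot}_{\calB}$ into $\frobnorm{\cdot}$ and preserves rank, and then transform the truncated standard SVD back using $\tU = C_E^{-1}U$, $\tV = C_D^{-1}V$. Your version is simply a bit more explicit about the rank-preserving bijection than the paper's terse argument.
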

\begin{proof}
Using the Cholesky decomposition~\eqref{eq:kronchol}, it holds that $\| Z-Y\|_{\calB} = \| {\tilde Z - \tilde Y} \|_F$ with $\tilde Z = C_E Z C_D^\top$ and $\tilde Y = C_E Z C_D^\top$. Because of the equivalence between 
the weighted SVD of $Z$ and the usual SVD of $\tilde Z$ (see proof of Proposition~\ref{proposition:weightedsvd}),
the result follows from the Eckart--Young theorem.
\end{proof}

The metric projection retraction with respect to the inner product induced by $\calB$ is given by
\begin{equation} \label{eq:weightedretraction}
\Retr_X^{\p}(\xi) := \Pp_{\M_r}^{\calB}(X+\xi), \quad X \in \M_r, \quad \xi \in \Tang_X\M_r.
\end{equation}
Given representations $X = \tU\tSigma\tV^\top$ and $\xi \reprby (\tM, \tU_p, \tV_p)$, one computes 
$
  X + \xi =
  \begin{bsmallmatrix}
    \tU & \tU_p
  \end{bsmallmatrix}
  \begin{bsmallmatrix}
    \tSigma + \tM & I_r \\
    I_r           & 0
  \end{bsmallmatrix}
  \begin{bsmallmatrix}
    \tV & \tV_p
  \end{bsmallmatrix}^\top
$, confirming that this matrix has rank at most $2r$. This can be exploited to compute~\eqref{eq:weightedretraction} efficiently. Assuming $2r\leq \min( m, n )$, one first computes weighted QR decompositions
\[
\begin{bmatrix}\tU & \tU_p\end{bmatrix} = Q_UR_U, \quad 
\begin{bmatrix}\tV & \tV_p\end{bmatrix} = Q_VR_V, \quad Q_U^\top D Q_U = Q_V^\top E Q_V = I_{2r}.
\]
For simplicity, we compute these two decompositions from the standard QR decompositions of 
$C_E \begin{bmatrix}\tU & \tU_p\end{bmatrix}
$
and   $C_D \begin{bmatrix}\tV & \tV_p\end{bmatrix}$, respectively. Alternatively, 
a weighted Gram-Schmidt procedure~\cite{leon_gram-schmidt_2013} or a weighted Householder-QR decomposition~\cite{shao_householder_2023} can be used, which directly use $D,E$ and do not require the availability of the Cholesky factors $C_D,C_E$. 
Using the (standard) SVD of the $2r\times 2r$ matrix
$
  R_U \begin{bsmallmatrix}
    \tSigma + t\tM & tI_r \\
    tI_r           & 0_r
  \end{bsmallmatrix} R_V = \bar{U} \bar{\Sigma} \bar{V},
$
gives the weighted (thin) SVD
\[
  X+\xi = \left(Q_U\bar{U}\right) \bar{\Sigma} \left(Q_V\bar{V}\right)^\top.
\]
Finally, truncation in the sense of Proposition~\ref{proposition:weightedEY} yields~\eqref{eq:weightedretraction}.

Note that when computing $\Retr_X^{\p}(t \,\xi)$ for multiple values of $t$ (e.g., in a line search procedure) the weighted QR decompositions need to be computed only once.

\paragraph*{Transporter} The transporter is defined in terms of $\calB$-orthogonal tangent space projections as explained in \cref{sec:geometry-fixed-rank}.

\paragraph*{Riemannian gradient} Given the gradient $Z = \grad\bar{f}(X)$, the Riemannian gradient
is given by $\grad_{\calB} f(X) = \Proj^\calB_X \left(\calB^{-1} Z\right)$; see~\eqref{eq:riemanngradientwithrespecttoB}.
Using $\calB^{-1} Z = E^{-1} Z D^{-1}$ and the expression~\eqref{eq:proj_formula_weighted} for the tangent space projection  $\Proj^\calB_X$, we obtain that $\grad_{\calB}f(X)\in\Tang_X\M_r$ is represented by the triplet
\begin{equation}
  \label{eq:grad_repr_metricEXD}
    \tM   = \tU^\top Z \tV, \quad \tU_p  = E^{-1}(Z\tV - E\tU\tM ), \quad
    \tV_p = D^{-1}\big(Z^\top\tU - D\tV\tM^\top \big).
\end{equation}
Observe that the matrices $E\tU_p$ and $D\tV_p$, needed for determining inner products~\eqref{eq:innerprodtangentspace}, are available for free when computing $\tU_p$ and $\tV_p$. 

\subsection{Preconditioned gradient with \texorpdfstring{$\calP X = EXD$}{PX = EXD}}

\label{sec:precond_EXD}




Instead of changing the inner product, one can use the preconditioned gradient~\eqref{eq:precondgradient} to improve the convergence of R-NLCG. This requires solving a linear operator equation of the form
$\p_X(\xi) = \eta$, with $\eta = \grad f(X) \in \Tang_X\M_r$, in every iteration. For $\calP X = EXD$, this amounts to solving \begin{equation}
  \label{eq:equation_pinv_EXD}
  \Proj_X(E\xi D) = \eta \qquad \xi\in\Tang_X\M_r.
\end{equation}
Considering the parametrizatios for the known $X = U\Sigma V^\top$ and $\eta\reprby(M_\eta, U_\eta, V_\eta)$, and for the unknown $\xi\reprby(M_\xi, U_\xi, V_\xi)$, the equation~\eqref{eq:equation_pinv_EXD} is equivalent to
\begin{subequations}
  \begin{align}
    &(EU) M_\xi (V^\top DV) + EU_\xi (V^\top DV) + (EU) V_\xi^\top (DV) = U_\eta + UM_\eta, \label{eq:EXD_prec_eq1}           \\
    &(DV) M_\xi^\top (U^\top EU) + DV_\xi (U^\top EU) + (DV) U_\xi^\top (EU) = V_\eta + VM_\eta^\top, \label{eq:EXD_prec_eq2} \\
    &(U^\top E)U_\xi (V^\top D V) + (U^\top E U) V_\xi^\top (V^\top D)^\top + (U^\top E U)M_\xi (V^\top D V) = M_\eta. \label{eq:EXD_prec_eq3}
  \end{align}
\end{subequations}
From \eqref{eq:EXD_prec_eq1} and \eqref{eq:EXD_prec_eq2} one obtains
\begin{equation*}
      \begin{split}
     U\left[ M_\xi (V^\top DV) + V_\xi^\top DV \right] + U_\xi (V^\top DV) & = E^{-1}(U_\eta + UM_\eta), \\
     V\left[ M_\xi^\top (U^\top EU) + U_\xi^\top EU \right] + V_\xi (U^\top EU) & = D^{-1}(V_\eta + VM_\eta^\top).
  \end{split}
\end{equation*}
Exploiting that $U_\xi, V_\xi$ are orthogonal to $U,V$, it follows that these matrices are computed as
\[    U_\xi = \Pp_U^\perp E^{-1}(U_\eta + UM_\eta) (V^\top D V)^{-1},\quad
    V_\xi = \Pp_V^\perp D^{-1}(V_\eta + VM_\eta^\top) (U^\top E U)^{-1}. 
\]
Plugging these solutions into~\eqref{eq:EXD_prec_eq3} yields
\[
  M_\xi = (U^\top E U)^{-1}\left[M_\eta - (U^\top E)U_\xi (V^\top D V) - (U^\top E U) V_\xi^\top (V^\top D)^\top\right] (V^\top D V)^{-1}.
\]

\paragraph*{Implementation aspects and complexity} The evaluation of $(M_\xi, U_\xi, V_\xi)$ using the expressions derived above requires the solution of $r$ linear systems with the matrices $E$ and $D$. Assuming $E$ and $D$ to be sparse, this benefits from precomputing sparse Cholesky factorizations of $E$ and $D$. Additionally, $m$ linear systems with $U^\top E U$ and $n$ linear systems with $V^\top D V$ need to be solved. Using Cholesky factorizations of these two dense matrices, this requires ${\calO(r^2 (m+n))}$ operations, which matches the asymptotic complexity of carrying out a retraction or a vector transport.

{\remark{Preconditioning the inner product as described in \cref{sec:geometry_metricEXD} or preconditioning the gradient as described above lead to the same preconditioned Riemannian gradient, but not to the same R-NLCG iterations due to the different metrics used in the retraction.}}

\subsection{Preconditioned gradient with \texorpdfstring{$\calP X = AX+XB$}{PX = AX+XB}}
\label{sec:precond_AXXB}

We now consider preconditioned gradients with the Sylvester operator $\p X = AX+XB$ for SPD $A, B$. This requires applying $\p_X^{-1}$ to a tangent vector $\eta \in \Tang_X\M$, which amounts to solving the projected Sylvester equation
\begin{equation}
  \label{eq:proj_equations_AXXB}
  \Proj_X(A\xi + \xi B) = \eta \qquad \xi\in\Tang_X\M_r.
\end{equation}
This equation has been addressed in~\cite[\S~7.2]{vandereycken_riemannian_2010} for Lyapunov operators ($B=A$) and the manifold of symmetric positive semidefinite fixed-rank matrices. A more general scenario involving tensors of fixed multilinear rank was considered in \cite[\S~4.2]{KressnerSteinlechner2016}.
Paralleling the developments in~\cite{vandereycken_riemannian_2010}, we outline the solution of~\eqref{eq:proj_equations_AXXB}.

Considering again the parametrizations $\eta\reprby(M_\eta, U_\eta, V_\eta)$, $\xi\reprby(M_\xi, U_\xi, V_\xi)$, the linear operator equation~\eqref{eq:proj_equations_AXXB} is equivalent to solving the system of matrix equations
\begin{equation}
  \label{eq:original_system_AXXB}
  \begin{split}
    (\Pp_U^\perp A) U_\xi + U_\xi (V^\top B V) & = U_\eta - (\Pp_U^\perp AU)M_\xi,\\
    (\Pp_V^\perp B) V_\xi + V_\xi (U^\top A U) & = V_\eta - (\Pp_V^\perp BV)M_\xi^\top,\\
    (U^\top A U)M_\xi + M_\xi (V^\top B V)     & = M_\eta - (U^\top A)U_\xi - ((V^\top B)V_\xi)^\top.
  \end{split}
\end{equation}
Each individual matrix equation can be decoupled by performing the spectral decompositions
\begin{equation} \label{eq:spectraldecomp}
   U^\top A U = Q_A \Lambda_A Q_A^\top, \qquad V^\top B V = Q_B \Lambda_B Q_B^\top.
\end{equation}
with
$\Lambda_A = \diag(\lambda^{(A)}_1, \dots, \lambda^{(A)}_r)$,
$\Lambda_B = \diag(\lambda_1^{(B)}, \dots, \lambda^{(B)}_r)$.

After the change of coordinates $\oU_\alpha = U_\alpha Q_B$, $\oV_\alpha = V_\alpha Q_A$,
$\oM_\alpha = Q_A^\top M_\alpha Q_B$ for $\alpha \in \{\xi,\eta\}$ and $\oU = UQ_A$, $\oV = VQ_B$, 
the system~\eqref{eq:original_system_AXXB} becomes equivalent to
\begin{subequations}
  \begin{align}
    (\Pp_{\oU}^\perp A) \oU_\xi + \oU_\xi \Lambda_B & = \oU_\eta - (A\oU - \oU \Lambda_A)\oM_\xi, \label{eq:sylv_prec_eq1}                            \\
    (\Pp_{\oV}^\perp B) \oV_\xi + \oV_\xi \Lambda_A       & = \oV_\eta - (B\oV - \oV\Lambda_B)\oM_\xi^\top, \label{eq:sylv_prec_eq2}                  \\
    \Lambda_A \oM_\xi + \oM_\xi \Lambda_B                    & = \oM_\eta - (\oU^\top A)\oU_\xi - ((\oV^\top B)\oV_\xi)^\top \label{eq:sylv_prec_eq3},
  \end{align}
\end{subequations}
together with the orthogonality constraints $\oU^\top \oU_\xi = \oV^\top \oV_\xi  =  0$. Evaluating the $i$th column of~\eqref{eq:sylv_prec_eq1} yields
\[
  (\Pp_{\oU}^\perp A) \oU_\xi(:, i) + \lambda^{(B)}_i \oU_\xi(:, i) = \oU_\eta(:, i) - (A\oU - \oU \Lambda_A)\oM_\xi(:, i),
\]
Multiplying both sides of this equation with the matrix
\[
 L_u^{(i)} := \big(I_{m} + (A + \lambda^{(B)}_i I_m )^{-1}\oU \big(S_u^{(i)}\big)^{-1}\oU^\top \big)(A + \lambda^{(B)}_i I_m )^{-1}, 
\]
where $S_u^{(i)} := -\oU^\top (A + \lambda^{(B)}_i I_m )^{-1}\oU\in\R^{r\times r}$,
and exploiting $\oU^\top \oU_\xi(:,i) = 0$,
one obtains that 
\begin{equation}
  \label{eq:uxi_sol}
  \oU_\xi(:,i) = L_u^{(i)} \oU_\eta(:, i) - L_u^{(i)} (A\oU - \oU \Lambda_A) \oM_\xi(:, i).
\end{equation}
Similarly, the $i$th column of the relation~\eqref{eq:sylv_prec_eq2} gives
\begin{equation}
  \label{eq:vxi_sol}
  \oV_\xi(:,i) = L_v^{(i)} \oV_\eta(:, i) - L_v^{(i)} (B\oV - \oV\Lambda_B) \oM_\xi(i, :)^\top,
\end{equation}
with an analogous expression for $L_v^{(i)}$. Note that the formulas~\eqref{eq:uxi_sol} and~\eqref{eq:vxi_sol} still depend on the unknown $i$th column and row of $\oM_\xi$. Substituting these formulas into~\eqref{eq:sylv_prec_eq3} results in the equation
\begin{equation} \label{eq:AXXB_system_notvectorized}
  \begin{bmatrix}
    M_\xi(1, :)(\Lambda_1^{(A)} )^\top \\ \vdots \\ M_\xi(r, :)(\Lambda_r^{(A)})^\top
  \end{bmatrix}+  \big[\begin{array}{c|c|c}
      \Lambda_1^{(B)}M_\xi(:, 1) & \cdots & \Lambda_r^{(B)} M_\xi(:, r) \\
    \end{array}\big]
 = R,
\end{equation}
with the coefficient matrices
\[
 \Lambda_i^{(A)}  = \lambda_i^{(A)} I_r - (\oV^\top B) L_v^{(i)} (B\oV - \oV \Lambda_B), \quad 
 \Lambda_i^{(B)}  = \lambda_i^{(B)} I_r - (\oU^\top A) L_u^{(i)} (A\oU - \oU \Lambda_A),
\]
and the right-hand side
\begin{equation*}
  R = \oM_\eta - (\oU^\top A)W_u - ((\oV^\top B)W_v)^\top.
\end{equation*}
where the $i$th columns of $W_u$ and $W_v$ are given by $L_u^{(i)} \oU_\eta(:, i)$ and $L_v^{(i)} \oV_\eta(:, i)$, respectively. 

Clearly, the system is linear~\eqref{eq:AXXB_system_notvectorized} is linear in $M_\xi$ and can thus be reformulated as an $r^2 \times r^2$ linear system in $\mathsf{vec}(M_\xi)$. Solving this linear system determines
$M_\xi$, which can be inserted into~\eqref{eq:uxi_sol} and~\eqref{eq:vxi_sol} to
determine $\oU_\xi$ and $\oV_\xi$, respectively. Finally, reverting the change of coordinates yields $U_\xi = \oU Q_B^\top$, $V_\xi = \oV Q_A^\top$, and ${M_\xi = Q_A \oM_\xi Q_B^\top}$.

\paragraph*{Implementation aspects and complexity} The cost of computing $\xi \reprby (M_\xi, U_\xi, V_\xi)$ using the procedure described above is dominated by setting up and solving the linear system~\eqref{eq:AXXB_system_notvectorized} as well as the subsequent computation of $\oU_\xi$
and $\oV_\xi$ according to~\eqref{eq:uxi_sol} and~\eqref{eq:vxi_sol}. Let $c_A(m, r)$ denote the cost 
of solving $r$ linear systems with $A + \lambda^{(B)}_i I_m$ by, e.g., precomputing a sparse Cholesky factorization of the matrix once and performing backward/forward substitution with the triangular factor $r$ times. Then a total of $\calO(c_A(m,r)r)$ flops are needed in order to apply $(A + \lambda^{(B)}_i I_m)^{-1}$ to $\oU$, $(A\oU - \oU K)$ and $\oU_\eta(:, i)$ for all $i$. Additionally, the application of $L_u^{(i)}$ requires the solution of $\calO(r)$ linear systems with the dense $r\times r$ matrix $S_u^{(i)}$, which requires another $\calO( r^4 )$ flops in total. The other operations, like applying $A$ to $\oU$, can be expected to remain negligible in cost. Analogously, for applying $(B + \lambda^{(A)}_i I_m)^{-1}$ and $L_v^{(i)}$, a total of $\calO(c_B(n, r)r+r^4)$ flops are needed. Assuming linear complexity for the sparse direct solvers, $c_A(m,r) = \calO(mr)$ and $c_B(n,r) = \calO(nr)$, this amounts to a cost of $\calO(mr^2 + nr^2 + r^4)$ operations, which is on the level of computing a retraction or a transporter as long as $r = \calO(\sqrt{m+n})$. However, solving the $r^2 \times r^2$ linear system equivalent to~\eqref{eq:AXXB_system_notvectorized} with a direct dense method takes another $\calO(r^6)$ flops, which is feasible only for relatively small ranks. For larger ranks, an iterative method might be preferable, as it requires $\calO(r^3)$ flops per iteration to apply the operator on the left-hand side of~\eqref{eq:AXXB_system_notvectorized}.


\subsection{Preconditioning with \texorpdfstring{$\calP X = AXD+EXB$}{PX = AXD + EXB}}

\label{sec:precond_AXDEXB}
We now aim at utilizing a preconditioner of the form $\calP X = AXD+EXB$ for SPD $A, B, D, E$. When attempting to directly use this preconditioner to precondition the Riemannian gradient, it turns out that the presence of the matrices $D,E$ makes the involved linear system $\Proj_X (A\xi D + E \xi B) = \eta$ significantly more expensive to solve. In particular, the technique of \cref{sec:precond_AXXB} to decouple equations for the columns of $U_\xi$ and $V_\xi$ cannot be applied directly. To avoid this problem, we incorporate $D,E$ by adjusting the inner product, as in \cref{sec:geometry_metricEXD}, which will then allow us to resort to the Sylvester case from \cref{sec:precond_AXXB}.

Specifically, we decompose $\calP$ as follows:
\begin{equation*}
    \p = \calB \tilde{\p} \quad \text{where} \quad  \calB X = EXD, \quad  \tilde{\p} X = \calB^{-1}\p X = E^{-1}AX+XBD^{-1}.
\end{equation*}
Note that $\tilde{\p}$ remains an SPD linear operator in the inner product induced by $\calB$. Together with the identity 
$
  \inner{X}{Y}_\p = \langle {X}, {\tilde{\p}Y} \rangle_{\calB},
$
this suggests to use $\calB$ for the inner product and $\tilde{\p}$ for preconditioning the Riemannian gradient.
In other words, the Riemannian optimization tools from \cref{sec:geometry_metricEXD} are used, except that the Riemannian gradient is replaced by \[
\tilde{\p}_X^{-1} \grad_{\calB} f(X) = \tilde{\p}_X^{-1} \Proj_X^{\calB} \calB^{-1} \nabla \barf(X) \]
with $\tilde{\p}_X = \Proj_X^{\calB} \circ \tilde{\p} \circ \Proj_X^{\calB}$. Setting $\eta = \Proj_X^{\calB} \calB^{-1} \nabla \barf(X) \in \Tang_X \M_r$, this requires determining $\xi\in\Tang_X\M_r$ such that
the linear operator equation
\begin{equation}
  \label{eq:proj_equations_AXDEXB}
  \Proj_X^{\calB}(E^{-1}A\xi + \xi BD^{-1}) = \eta
\end{equation}
holds, which is structurally close to the projected Sylvester equation~\eqref{eq:proj_equations_AXXB}.

Paralleling the developments from the previous section, we now discuss a procedure for solving~\eqref{eq:proj_equations_AXDEXB} that avoids forming the matrices $E^{-1}A$ and $BD^{-1}$ explicitly.
Using the parametrizations $\eta\reprby(\tM_\eta, \tU_\eta, \tV_\eta)$ and $\xi\reprby(\tM_\xi, \tU_\xi, \tV_\xi)$ from \cref{sec:geometry_metricEXD}, the equation~\eqref{eq:proj_equations_AXDEXB} can be rearranged as
\begin{equation}
  \label{eq:original_system_AXDEXB}
  \begin{split}
    (I_m - E \tU\tU^\top)A \tU_\xi + E \tU_\xi (\tV^\top B \tV) & = E\tU_\eta - (A\tU - E\tU(\tU^\top A \tU)) \tM_\xi,\\
    (I_m - D \tV\tV^\top)B \tV_\xi + D \tV_\xi (\tU^\top A \tU) & = D\tV_\eta - (B\tV - D\tV(\tV^\top B \tV)) \tM_\xi^\top,\\
    (\tU^\top A \tU)\tM_\xi + \tM_\xi (\tV^\top B \tV)     & = \tM_\eta - (\tU^\top A)\tU_\xi - ((\tV^\top B)\tV_\xi)^\top,
  \end{split}
\end{equation}
together with the orthogonality constraints $\tU^\top E \tU_\xi =  \tV^\top D \tV_\xi = 0_r$. At this point, one can essentially follow the procedure discussed in \cref{sec:precond_AXXB}. Performing spectral decompositions~\eqref{eq:spectraldecomp} of $\tU^\top A \tU$, $\tV^\top B \tV$, and performing an analogous change of variables decouples the columns of the first two equations in~\eqref{eq:original_system_AXDEXB}. In particular, the
$i$th column of the transformed first equation reads as
\[
  (I_m - E \tU\tU^\top)A \oU_\xi(:, i) + \lambda^{(B)}_i E \oU_\xi(:, i) = E \oU_\eta(:, i) - (A\oU - E \oU \Lambda_A)\oM_\xi(:, i).
\]
Multiplying both sides of this equation with the matrix
\[
 L_u^{(i)} := \big(I_{m} + (A + \lambda^{(B)}_i E )^{-1}(E\oU) \big(S_u^{(i)}\big)^{-1}(E\oU)^\top \big)(A + \lambda_i E )^{-1}, 
\]
where $S_u^{(i)} := -(E\oU)^\top (A + \lambda^{(B)}_i E )^{-1}(E\oU)\in\R^{r\times r}$, and exploiting $\oU^\top E \oU_\xi(:, i) = 0$ again gives
\[
 \oU_\xi(:,i) = L_u^{(i)} \oU_\eta(:, i) - L_u^{(i)} (A\oU - \oU \Lambda_A) \oM_\xi(:, i).
\]
Similarly, the expression~\eqref{eq:vxi_sol} for $\oV_\xi(:,i)$ and the equation~\eqref{eq:AXXB_system_notvectorized} for $\oM_\xi$ are extended.
Note that there is no need to explicitly compute $E^{-1}A$ and $D^{-1}B$; instead, pencils of the form $A + \lambda E$ and $B + \lambda D$ are used.

\paragraph*{Implementation aspects and complexity}
The analysis of computational cost is completely analogous to the one in \cref{sec:precond_AXXB};
one only needs to replace $c_{A}(m,r)$ and $c_{B}(n,r)$ by 
the cost for solving $r$ linear systems with the (sparse) SPD matrices $A + \lambda^{(B)}_i E$
and $B + \lambda^{(A)}_i D$, respectively.

\subsection{tangADI: ADI on the tangent space}
\label{sec:ADI}

If the preconditioner takes the form ${\mathcal{P} X = AXD+EXB}$ the application of $\mathcal{P}^{-1}$ entails the solution of a generalized Sylvester equation. ADI~\cite{wachspress_iterative_1988} 
is a popular strategy for iteratively solving such equations, which has been adapted to produce low-rank approximations in, e.g.,~\cite{benner_adi_2009,li_low_2002}. The goal of this 
section is to develop an ADI-like iteration on the tangent space to approximate the inverse of $\p_X = \Proj_X \circ \p \circ \Proj_X$, as a cheaper alternative to the procedures described in \cref{sec:precond_AXXB,sec:precond_AXDEXB} for the exact inversion of $\p_X$.

\subsubsection{Classical ADI}

Classical ADI~\cite{wachspress_iterative_1988} can
be derived from the observation that the Sylvester operator $\p X = AX + XB$ is a sum of two commuting linear operators, $X\mapsto AX$ and $X\mapsto XB$, and that applying (shifted) inverses of the individual summands is much simpler than applying $\p^{-1}$ as a whole. ADI extends to the solution of a generalized Sylvester equations ${AXD+EXB = F}$ by (formally) rewriting it as the standard Sylvester equation $E^{-1}A X + XBD^{-1} = E^{-1}FD^{-1}$. The resulting iteration can be rephrased such that explicit inverses of $D,E$ are avoided. Given the current iterate $X^{(j-1)} \in \Rmn$ the next iterate $X^{(j)}$ of ADI is determined by solving the matrix equation
\begin{equation}
  \label{eq:ADI_AXDEXB}
  (A-q_jE)X^{(j)}(B+p_jD) = (p_j - q_j)F  + (A-p_jE)X^{(j-1)}(B+q_jD),
\end{equation}
which amounts to multiplying both sides of the equation with $(A-q_jE)^{-1}$ and $(B+p_jD)^{-1}$
The scalars $p_j,q_j$ are called \emph{shift parameters} and their choice significantly influences convergence. Although it is common to employ different shifts at each iteration to attain fast convergence, it is worth noting that~\eqref{eq:ADI_AXDEXB} can be viewed as a fixed point iteration for constant shifts $(p_j, q_j) \equiv (p, q)$.
This fixed point iteration is derived from the operator splitting
\begin{equation}
  \label{eq:generalized_sylvester_rewriting}
  \p X = \frac{1}{p-q}\Big(\underbrace{(A - qE) X (B + pD)}_{\calG (X)} - \underbrace{(A - pE) X (B + qD)}_{\calN (X)}\Big)
\end{equation}
and converges linearly provided that $\rho\big(\calG^{-1}\calN\big)<1$, where $\rho(\cdot)$ denotes the spectral radius. Provided that the pencils $A -\lambda E$, $B -\lambda D$ are diagonalizable, the convergence rate 
is determined by their spectra $\Lambda(A, E)$, $\Lambda(B, D)$:
\[
  \rho\left(\calG^{-1}\calN\right) = \max_{\substack{\lambda\in\Lambda(A, E) \\ \mu\in\Lambda(B, D)}} \abs*{\frac{(\lambda - p)(\mu + q)}{(\lambda - q)(\mu + p)}}.
\]

\subsubsection{Basic form of tangADI}
\label{sec:tangADI}

In this section, we derive an ADI-like iteration for solving the projected generalized Sylvester equation
\begin{equation*}
  \p_X(\xi) = \Proj_X(A\xi D + E \xi B) = \eta \qquad \xi\in\Tang_X\M_r,
\end{equation*}
for given $\eta\in\Tang_X\M_r$ and SPD matrices $A, B, D, E$. Considering the decomposition $\p_X(\xi) = \Proj_X(A\xi ) + \Proj_X(\xi B)$ for $E = I_m$ and $D = I_n$, the presence of the projection $\Proj_X$ implies that the operators defining the two summands do not commute and, in turn, the usual arguments for deriving ADI do not apply.

On the other hand, we can still extend the splitting~\eqref{eq:generalized_sylvester_rewriting},
\[
  \p_X(\xi) = \frac{1}{p-q}\Big(\underbrace{\Proj_X\big( (A - qE) \xi (B + pD) \big)}_{\calG_X(\xi)} - \underbrace{\Proj_X\big( (A - pE) \xi (B + qD) \big)}_{\calN_X(\xi)}\Big),
\]
resulting in the fixed-point iteration $\xi^{(j)} = \calG^{-1}_X \big( \calN_X(\xi^{(j-1)}) + (p-q) \eta \big)$ on the tangent space $\Tang_X\M_r$.
The convergence of this iteration is determined by $\rho\big( \calG^{-1}_X \calN_X \big)$ and interlacing properties 
for eigenvalues of definite matrix pencils \cite{lancaster_variational_1991} imply
\begin{equation} \label{eq:spectralradius}
   \rho\big({\calG}_X^{-1} {\calN}_X\Big) \leq \rho \big(\calG^{-1}\calN\big),
\end{equation}
with $\calG, \calN$ defined as in~\eqref{eq:generalized_sylvester_rewriting}. In other words, the convergence rate of this fixed point iteration is not worse than the convergence rate of ADI~\eqref{eq:ADI_AXDEXB} with \emph{constant} shifts.

In practice, one observes that allowing for non-constant shifts $p_j,q_j$ benefits the convergence of the fixed point iteration on the tangent space as well, leading to the \emph{tangADI} iteration:
\begin{multline}
  \label{eq:tangADI_implicit_iteration}
  \Proj_X\big((A-q_{j}E)\xi^{(j)}(B+p_{j}D)\big) = \\ = \Proj_X\big((A-p_{j}E)\xi^{(j-1)}(B+q_{j}D)\big) + (p_{j} - q_{j})\eta.
\end{multline}

Due to the lack of commutativity mentioned above, the usual arguments~\cite{beckermann_bounds_2019} for analyzing the convergence of ADI do not apply to~\eqref{eq:tangADI_implicit_iteration} and therefore there is no rigorous theoretical basis for choosing (non-constant) shifts. Still, the inequality~\eqref{eq:spectralradius} suggests that the shifts used for classical ADI are a good choice for tangADI as well.
In our experiments, we employ the elliptic integral based (sub)optimal Wachspress' ADI shifts \cite{wachspress_adi_2013}.
Such a choice of shifts also has the advantage that it does not depend on the tangent space and, hence, the shifts can be reused across different iterations of R-NLCG.

\subsubsection{Implementation aspects and complexity}

The implementation of tangADI requires the solution of the equation~\eqref{eq:tangADI_implicit_iteration}.
For this purpose, we consider the usual parametrizations for the known quantities $X = U\Sigma V^\top$, $\eta \reprby (M_\eta, U_\eta, V_\eta)$, and for the unknown quantity $\xi^{(j)} \reprby (M_j, U_j, V_j)$.
Defining $Z_j \defas (A-p_{j}E)\xi^{(j-1)}(B+q_{j}D)$, the techniques discussed in \cref{sec:precond_EXD} yield
\begin{equation*}    
  \begin{split}
    U_{j+1} =& \Pp_U^\perp (A - q_j E)^{-1}(Z_jV + U_\eta + UM_\eta) (V^\top BV + p_jV^\top D V)^{-1} \\
    V_{j+1} =& \Pp_V^\perp (B + p_j D)^{-1}(Z_j^\top U + V_\eta + VM_\eta^\top) (U^\top A U - q_j U^\top E U)^{-1} \\
    M_{j+1} =& (U^\top AU -q_jU^\top EU)^{-1}\big[U^\top Z_jV + M_\eta - U^\top A U_{j+1} (V^\top BV + p_j V^\top DV)  \\
      &+ (U^\top AU - q_jU^\top EU) V_{j+1}^\top BV\big](V^\top BV + p_j V^\top DV)^{-1}.
  \end{split}
\end{equation*}
Note that the matrix $Z_j$ is not explicitly formed; instead the quantities $Z_jV$ and $Z_j^\top U$ appearing in these expressions are evaluated by setting $\xi^{(j)} = \left[\begin{array}{c|c} U & U_j \\ \end{array}\right]\left[\begin{array}{c|c} VM_j+V_j & V\\ \end{array}\right]^\top =: Y_jW_j^\top$ and computing
\begin{equation*}
    Z_jV = \big[(A + p_j E)Y_j\big]\big[(B + q_j D)W_j\big]^\top V, \quad 
    Z_j^\top U = \big[(B + q_j D)W_j\big]\big[(A + p_j E)Y_j\big]^\top U.
\end{equation*}

Using the formulas derived above, the two dominant costs of the $j$th iteration of tangADI are:
\begin{itemize}
  \item The solution of $r$ sparse linear systems with matrices $(A - q_j E)$ and $(B + p_j D)$, with a complexity of  $\calO( c_{(A,E)}(m,r) + c_{(B,D)}(n,r) )$.
  \item The solution of $m$ linear systems with the dense matrix $(U^\top A U - q_j U^\top E U)$ and $n$ linear systems with the dense matrix $(V^\top B V- p_j V^\top D V)$, which (using, e.g., Cholesky factorizations) requires $\calO(r^3) + \calO(r^2 (m+n)) = \calO(r^2 (m+n))$ flops. 
\end{itemize}
When $c_{(A,E)}(m,r)$, $c_{(B,D)}(n,r)$ are linear in $m,n$, we thus arrive at a total complexity of $\calO(r^2 (m+n))$.

\begin{remark}
Similar ideas to the ones in this section 
can be found in \cite{KressnerSteinlechner2016}, 
which presents a (Riemannian) truncated preconditioned Richardson iteration. In that 
work, the preconditioner is applied to the \emph{Euclidean} gradient. Instead of applying tangADI to the Riemannian gradient, this strategy requires the application of factored ADI~\cite{beckermann_bounds_2019} (fADI) to the \emph{Euclidean} gradient. 
This comes with two disadvantages:
(1) the \emph{Euclidean} gradient has significantly higher rank ($\ell\, r + r_F$ instead of $2r$) and (2) the rank of the approximation returned by fADI grows with the number of iterations. These disadvantages can be countered with low-rank truncation, which, however, comes with additional cost not needed when using tangADI. Further numerical experiments
are reported in \cite{Bioli2024}, 
and demonstrate the advantages of the method presented here.
\end{remark}

\section{Preconditioned R-NLCG with rank adaptivity}
\label{sec:rlcg_rankadaptivity}
\begin{algorithm}
  \caption{Preconditioned R-NLCG for Multiterm Linear Matrix Equations}
  \label{alg:rnlcg_multiterm}
  \begin{algorithmic}[1]
    \Require{$\calA X = \sum_{i=1}^\ell A_i XB_i^\top$ with SPD $\calA$, right-hand-side $F = F_L F_R^\top$}
    \Input{Rank $r$, initial guess $X_0 = \tU \tSigma \tV^\top \in\M_r$ (zero by default), inner product $\calB X = EXD$ with SPD $E,D$, Riemannian preconditioner $\tilde{\p}_X$, backtracking parameters $r,\tau\in(0,1)$ ($r = 10^{-4}, \tau = 0.5$ by default)}
    \smallskip
  
    \For{$k = 0, 1, 2, \dots$}
    \State Compute $\grad_{\calB} f(X_k)$ (using \eqref{eq:grad_repr_metricEXD} with $Z = \calA X_k - F$) \label{alg_line_rgrad}
    \State Compute $\tilde{\p}_{X_k}^{-1}\grad_{\calB} f(X_k)$ \Comment{Preconditioned gradient} \label{alg_line_prgrad}
    \State Compute $\beta_k = \max(0, \min(\beta_k^{\mathsf{HS}}, \beta_k^{\mathsf{DY}}))$ according to~\eqref{eq:modif_hestenes_stiefel_precond} \label{alg_line_betak}
    \State  $\xi_k = -\tilde{\p}_{X_k}^{-1}\grad_{\calB} f(X_k) + \beta_k \T_{X_k\leftarrow X_{k-1}}(\xi_{k-1})$ \label{alg_line_xik}
    \If{$\inner{\tilde{\p}_{X_k}^{-1}\grad_{\calB} f(X_k)}{\xi_k}_{\calB} \geq 0$} \Comment{If $\xi_k$ is not a descent direction} \label{alg_line_ifdescent}
    \State $\xi_k = -\tilde{\p}_{X_k}^{-1} \grad_{\calB} f(X_k)$ \Comment{Resort to R-GD} \label{alg_line_resetrgd}
    \EndIf
    \State Compute $\bar{\alpha}_k = - \frac{\innersmall{\grad_{\calB} f(X_k)}{\xi_k}_{\calB}}{\innersmall{\Proj_{X_k}^{\calB}(\calB^{-1}\calA \xi_k)}{\xi_k}_{\calB}}$ \Comment{Initial step-size for backtracking} \label{alg_line_initarmijo}
    \State Set $\alpha_k = \bar{\alpha}_k$ \label{alg_line_backtracking_1}
    \While{$f(\Pp_{\M_r}^{\calB}(X_k + \alpha_k \xi_k)) > f(X_k) + r \cdot \alpha_k \inner{\grad_{\calB} f(X_k)}{\xi_k}_{\calB}$} \label{alg_line_backtracking_2}
    \State $\alpha_k \gets \tau \cdot \alpha_k$ \label{alg_line_backtracking_3}  \Comment{Backtracking}
    \EndWhile
    \State $X_{k+1} = \Pp_{\M_r}^{\calB}(X_k + \alpha_k\xi_k)$ \Comment{Rank-$r$ truncation in $\calB$-inner product} \label{alg_line_step} 
    \EndFor
  \end{algorithmic}
\end{algorithm}

\Cref{alg:rnlcg_multiterm} summarizes our developments. It applies R-NLCG with a preconditioner $\p$ decomposed as $\tilde{\p} \calB$, where $\calB X = EXD$ with SPD $E,D$ is used to define the inner product, considering $\M_r$ as a Riemannian submanifold of $(\Rmn, \inner{\cdot}{\cdot}_{\calB})$, and 
$\tilde{\p}$ is used to precondition the Riemannian gradient through the action of $\tilde{\p}_X^{-1}$.
For $\p X = EXD$, we can choose either $\calB = \p$, $\tilde{\p} = \mathrm{id}$, or $\calB = \mathrm{id}$, $\tilde{\p} = \p$; the two choices lead to different algorithms. For $\p X = AXD+EXB$, we can set $\calB = EXD$ and $\tilde{\p} = E^{-1}AX+XBD^{-1}$, or use $\calB = \mathrm{id}$, $\tilde{\p} = \p$ and approximate the action of $\tilde{\p}_X^{-1}$ using tangADI.

At each iteration, \Cref{alg:rnlcg_multiterm} commences by computing the preconditioned Riemannian gradient (line~\ref{alg_line_rgrad}): notably, this is the only step where $\tilde{\p}_X^{-1}$ is applied. Subsequently, the R-NLCG search direction $\xi_k$ is computed, and if it is not a descent direction, we reset it to the negative preconditioned gradient (lines~\ref{alg_line_betak} to~\ref{alg_line_resetrgd}). Following \cite{vandereycken_low-rank_2013,KressnerSteinlechner2016}, the initial step size $\bar{\alpha}_k$ for Armijo backtracking is obtained by conducting an exact line search on the tangent space, neglecting the retraction (line~\ref{alg_line_initarmijo}). This initial estimate turned out to be highly effective, rarely necessitating  backtracking. Utilizing the metric projection with respect to $\mathcal{B}$ as a retraction (see~\cref{proposition:weightedEY}), starting from $\bar{\alpha}_k$, we utilize Armijo backtracking to compute the step size $\alpha_k$ (lines~\ref{alg_line_backtracking_1} to~\ref{alg_line_backtracking_3}), and finally execute the step (line~\ref{alg_line_step}).

\paragraph*{Complexity} To simplify the complexity analysis, we assume that the coefficients of $\calA, \calB,\tilde{\p}$ are sparse and that the cost of matrix-vector multiplication or solving a sparse linear system is linear with respect to the size of the sparse matrix involved.
Let $X = \tU\tSigma \tV^\top$. Writing
\begin{equation}
  \label{eq:residual_fact}
  \calA X - F= R_L R_R^\top \defas \begin{bmatrix}A_1\tU\tSigma &\cdots & A_\ell \tU\tSigma & -F_L\end{bmatrix} \begin{bmatrix}B_1\tV &\cdots & B_\ell \tV & F_R\end{bmatrix}^\top,
\end{equation}
we obtain that the cost of computing $f(X)$ and $\grad_{\calB} f(X)$ is $\calO \left(r (\ell\,r + r_F)(n+m)\right)$. Note that most of the operations needed to calculate the gradient are already performed when $f(X)$ is computed, and can therefore be reused; we refer to~\cite{Bioli2024} for details. Thus, lines~\ref{alg_line_rgrad} and~\ref{alg_line_backtracking_2} have cost $\calO \left(r (\ell\,r + r_F)(n+m)\right)$. Making simplifications similar to \cref{eq:grad_repr_metricEXD}, the cost of calculating $\bar{\alpha}_k$ is $\calO \left(r^2(n+m)\right)$. Lines~\ref{alg_line_betak} and~\ref{alg_line_step} have cost $\calO(r^2(m+n))$. Therefore, assuming $r_F = \calO(r)$, the total cost of one iteration of R-NLCG is $\calO \left(r^2(n+m)\right)$ plus the cost of applying $\tilde{\p}_{X_k}^{-1}$. In the case of a constant number of tangADI iterations, the latter has complexity $\calO \left(r^2(n+m)\right)$ as well.

\subsection{Rank adaptivity}
Until now, we  have assumed that the rank $r$ is constant and known. However, in practical applications, a good choice of $r$ is rarely known a priori, and determining it can be challenging. 
This has motivated the development of rank-adaptive techniques (see, e.g.,~\cite{gao_riemannian_2022, uschmajew_line-search_2014}), which we have extended to preconditioned R-NLCG for multiterm matrix equations.

\begin{algorithm}[ht]
  \caption{Riemannian Rank-Adaptive Method (RRAM)}
  \label{alg:rram}
  \begin{algorithmic}[1]
    \Parameters{Tolerance for truncation $\varepsilon_\sigma \in ]0,1[$, update rank $r_{\mathrm{up}}$}
    \Input{Initial guess $X_0\in\M_{r_0}$, $\calB X = EXD$ with SPD $E,D$, tolerance $\mathtt{tol} > 0$ on relative residual in $\calB$-norm}
    \smallskip
    \State Initialize $r = r_0$, $\mathrm{res} = \smallfrobnorm{\calA X_0 - F} / \smallfrobnorm{F}$, $k = 0$
    \While{$\mathrm{res} > \mathtt{tol}$}
    \While{fixed-rank optimization does not converge}
    \State One step of (preconditioned) R-NLCG, obtaining $X_{k+1} = \tU \tSigma \tV^\top \in\M_r$ \label{alg_line_rram_riemannopt}
    \If{(fixed-rank optim. did not converge) \textbf{and} ($\tilde{\sigma}_r^2 / \sum_{i=1}^r \tilde{\sigma}_i^2< \varepsilon_\sigma^2$)}
    \State $r \gets r_{-} = \max\{k\,:\,\sum_{i=k+1}^r \tilde{\sigma}_i^2 / \sum_{i=1}^r \tilde{\sigma}_i^2 \geq \varepsilon_\sigma^2 \}$ \Comment{rank decrease} \label{alg_line_rram_rank_decrease_1}
    \State $X_{k+1} \gets \tU(:, 1:r) \tSigma(1:r, 1:r) \tV(:, 1:r)^\top$ \label{alg_line_rram_rank_decrease_2}
    \EndIf
    \State $k \gets k+1$
    \EndWhile
    \State $\mathrm{res} = \smallfrobnorm{\calA X_k - F} / \smallfrobnorm{F}$ \label{alg_line_rram_res_computation}
    \If{$\mathrm{res} > \mathtt{tol}$}
    \State $X_k \gets X_k + \alpha_*Y_*$, where $Y_\star, \alpha_*$ are computed as in \eqref{eq:rank_update direction} \label{alg_line_rram_rank_increase_1}
    \State $r \gets r_+ = r + r_{\mathrm{up}}$ \label{alg_line_rram_rank_increase_2}
    \EndIf
    \EndWhile
  \end{algorithmic}
\end{algorithm}

Our Riemannian Rank-Adaptive Method (RRAM) is summarized in \Cref{alg:rram}. It alternates between fixed-rank optimization and updates that increase the rank. Beginning with an initial guess $X_0$ of rank $r=r_0$, we execute one step of R-NLCG on $\M_r$. If the current iterate becomes numerically rank-deficient (that is, the $r$th weighted singular value becomes small), the iterate is truncated to the largest rank $r_{-}$ for which the $r_{-}$th weighted singular value is not small, and the optimization restarts with rank $r_{-}$ (lines~\ref{alg_line_rram_rank_decrease_1} and~\ref{alg_line_rram_rank_decrease_2}). Upon convergence of the fixed-rank optimization, we compute the norm of the residual (line~\ref{alg_line_rram_res_computation}). If this norm is above the tolerance, we increment the rank for Riemannian optimization to $r_+ = r + r_{\mathrm{up}}$.
Following~\cite{gao_riemannian_2022, uschmajew_line-search_2014}, we construct a 
warm start for Riemannian optimization on $\M_{r_{+}}$ by adding a normal correction to the previous solution $X_k\in\M_r$ (lines~\ref{alg_line_rram_rank_increase_1} and~\ref{alg_line_rram_rank_increase_2}). For this purpose, we conduct a line search along the rank-$r_{\mathrm{up}}$ truncation of the normal component of the negative Euclidean gradient $-\grad_{\calB} \bar{f}(X_k) = \calB^{-1}(F - \mathcal{A}X_k)$. Using the same notation as in~\eqref{eq:residual_fact}, this yields the update $X_k \gets X_k + \alpha_* Y_*$, with
\begin{equation}
  \label{eq:rank_update direction}
  \begin{split}
    Y_*  &
    = \Pp_{\M_{r_\mathrm{up}}}^{\calB} \Big( \big((E^{-1} - \tU \tU^\top)R_L\big) \big((D^{-1}\tV \tV^\top)R_R\big)^\top\Big),\\
    \alpha_* &= - \frac{\innersmall{\grad_{\calB}\barf(X_k)}{Y_*}_{\calB}}{\innersmall{\calB^{-1}\calA Y_*}{Y_*}_{\calB}} =  \frac{\smallnorm{Y_*}_{\calB}^2}{\innersmall{\calA Y_*}{Y_*}}.
  \end{split}
\end{equation}
Note that the matrix $Y_*$ in~\eqref{eq:rank_update direction} is only well-defined if $\Proj_X^{\perp, \calB}(\calB^{-1}(F - \mathcal{A}X))$ has rank at least $r_{\mathrm{up}}$. If this is not the case, we add random components, $\calB$-orthogonal to the tangent space, until reaching rank $r_{\mathrm{up}}$.

We employ a heuristic strategy for halting fixed-rank Riemannian optimization by detecting a plateau in the residual norm. 
For this purpose, we compute the slope of the logarithm of an estimate of the residual norm over a backward window of \texttt{w\_len} iterations and compare it to a factor $\mathtt{fact} < 1$ times the mean slope over all previous iterations with the same rank. If the minimum slope over the last few iterations is less than this factor times the mean slope, fixed-rank optimization continues; otherwise, it halts. In our experiments, we employ Hutch++ \cite{meyer_hutch_2021} with $5$ matrix-vector multiplication to estimate the residual norm and set $\mathtt{w\_len} = 3, \mathtt{fact} = 0.75$.

\paragraph*{Complexity}
In addition to the cost of R-NLCG, the largest cost of RRAM occurs in the rank-increase steps when the residual norm and the truncated weighted SVD of $-\grad_{\calB} \bar{f}(X_k) = \calB^{-1}(F - \mathcal{A}X_k)$ are computed. Computing them using standard QR and SVD decomposition would involve a cost of $\calO\left((m+n)(\ell\, r)^2\right)$ flops, quadratic in $\ell \, r$, where $\ell$ is the number of terms in the equation. To obtain a cost linear in $\ell \, r$, one estimates the residual norm using Hutch++ \cite{meyer_hutch_2021} and could use a randomized (weighted) SVD~\cite{halko_finding_2011}. Nevertheless, for the sake of a simpler implementation, we employed a combination of QR decompositions and SVD instead of utilizing a randomized SVD.

\paragraph*{Convergence} Some convergence results for the fixed-rank variant of the algorithm have been established in \cite[\S4.3]{Bioli2024}. Local convergence to the solution $X_{\star} = \calA^{-1} F$ is ensured under the condition that $X_{\star} \in \M_r$. Global convergence can be guaranteed using a regularized cost function, employing techniques similar to those in \cite[\S4]{vandereycken_low-rank_2013}. Analyzing the convergence of the rank-adaptive version is more challenging due to the additional complexity introduced by the rank update heuristics.

\section{Numerical tests}
\label{sec:numexp}


We implemented all algorithms presented in this work in \MATLAB, R2024a. The preconditioned Riemannian methods are implemented using Manopt \cite{boumal_manopt_2013}. For R-NLCG, we used Manopt's \texttt{conjugategradient} solver, but we modified the Hestenes-Stiefel rule according to~\eqref{eq:modif_hestenes_stiefel_precond} in the preconditioned case. Unless otherwise stated, we have always used an initial guess $X_0$ that is randomly chosen to have suitable rank and Frobenius norm of $1$. 
We implemented the truncated CG method as outlined in \cite[Algorithm 2]{KressnerTobler2011}. For low-rank truncation, the following setup was found to be effective in our experiments. Given a target relative tolerance $\mathtt{tol}$ for the residual, we employed a relative truncation tolerance of $\epsilon_{\mathrm{rel}} = 0.0025 \cdot \mathtt{tol}$ for the truncation of the iterates. To prevent unnecessarily high ranks in the final CG steps, as suggested in \cite{kressner_preconditioned_2014}, we employed a mixed absolute-relative criterion for residual truncation, with $\epsilon_{\mathrm{rel}} = 0.1 \cdot \mathtt{tol}$ and $\epsilon_{\mathrm{abs}} = 0.001 \cdot \mathtt{tol}$. Using the notation from \cite[Algorithm 2]{KressnerTobler2011}, the same mixed criterion was applied to truncate $P_k$, while $Q_k$ was not truncated.  All numerical experiments were carried out on an \IntelCore{} i7-9750H 2.6 GHz CPU, featuring 6 cores, operating on a MacOS Sonoma machine with 16 GB RAM. The implementation is publicly available at \url{https://github.com/IvanBioli/riemannian-spdmatrixeq.git}.

Additional numerical experiments are reported in~\cite{Bioli2024}. In particular, we have also developed and tested a Riemannian trust-region approach, and observed it to be not competitive with R-NLCG.

In the following, we consider three problem classes representative for applications of multiterm matrix equations: PDEs on separable domains, stochastic/parametric PDEs, and control problems.

\subsection{Finite difference discretization of 2D PDEs on square domain} \label{sec:2dpde}
As a first test problem, we consider a stationary diffusion equation on a square domain with Dirichlet boundary conditions
\begin{equation}
    \label{eq:diffusion_pde}
    \begin{cases}
        -\nabla\cdot\left(k\nabla u\right)= 0 & \text{in } \Omega = [0, 1] \times [0,1],         \\
        u = g                                 & \text{on } \partial\Omega,
    \end{cases}
\end{equation}
where the diffusion coefficient $k$ is semi-separable: 
\begin{equation*}
    k(x,y) = \alpha_1 k_{1,x}(x)k_{1,y}(y) + \cdots +\alpha_{\ell_k} k_{\ell_k,x}(x)k_{\ell_k,y}(y). 
\end{equation*}
When using a standard finite difference discretization on a uniform mesh with mesh size ${h = 1/(n+1)}$ and arranging the unknowns as a matrix $U\in\mathbb{R}^{n\times n}$ such that $U_{s,k}\approx u(x_s,x_k)$ with $x_i = ih$, one obtains the matrix equation
\begin{equation}
    \label{eq:pde_fd_matrixeq}
    \sum_{j=1}^{\ell_k} \alpha_i\left(A_{j,x}^k U (D_{j,y}^k)^\top + D_{j,x}^k U (A_{j,y}^k)^\top\right) = F,
\end{equation}
where (using \MATLAB-like notation):
\begin{equation*}
    \begin{split}
        A_{j,z}^k &= \frac{1}{h^2} \mathrm{tridiag}\big(\big[-k_{j,z}(x_{i-\frac{1}{2}}),\,\,k_{j,z}(x_{i+\frac{1}{2}})+k_{j,z}(x_{i-\frac{1}{2}}),\,\, -k_{j,z}(x_{i+\frac{1}{2}})\big],\,\,{-1:1}\big), \\
        D_{j,z}^k &= \diag\left(k_{j,z}(x_1),k_{j,z}(x_2),\cdots,k_{j,z}(x_n)\right),
    \end{split}
\end{equation*}
with $z \in \{ x, y\}$. The right-hand-side matrix $F$ is given by:
\begin{equation*}
    \begin{split}
        &F = \sum_{j=1}^p e_1 b_l^\top + e_n b_r^\top + b_d e_1^\top + b_u e_n^\top, \\ 
        &[b_u]_i = k(x_i, 1-h/2) g(x_i, 1), \quad [b_d]_i = k(x_i, h/2) g(x_i,0), \\
        &[b_r]_j = k(1-h/2, x_j) g(1, x_j), \quad [b_l]_j = k(h/2, x_j) g(0, x_j). \\
    \end{split}
\end{equation*}
Assuming $k > 0$ in $\Omega$, the linear operator defined by \eqref{eq:pde_fd_matrixeq} is SPD. Moreover, all the matrices $A_{j, z}^k$ and $D_{j, z}^k$ are symmetric and, when additionally assuming $k_{j,z} > 0$, also SPD.

For our numerical experiments we consider $k(x,y) = 1 + \sum_{i = 1}^{\ell_k} \frac{\alpha^i}{i!} x^i\,y^i$, $f(x,y) = 0$, and $g(x, y) = \mathrm{exp}(-\alpha(x+1)y)$ with $\alpha = 10$, $\ell_k = 3$, and $n = 10\,000$. The resulting multiterm matrix equation~\eqref{eq:pde_fd_matrixeq} has ${\ell = 8}$ terms and $F$ has rank $r_F = 4$. 

\paragraph*{Preconditioning}
A suitable preconditioner for \eqref{eq:pde_fd_matrixeq} is derived by approximating the diffusion coefficient $k$ by  separable function $k_0(x,y) > 0$.
For the example above, we choose $k_0(x,y) = \left(1 + (\sqrt{\alpha}x)^{\ell_k} / \sqrt{\ell_k!}\right)\left(1 + (\sqrt{\alpha}y )^{\ell_k}/ \sqrt{\ell_k!}\right)$, as this form preserves both the lowest and highest degree terms in $k$. Discretizing~\eqref{eq:diffusion_pde} with $k$ replaced by $k_0$ yields a preconditioner of the form $\p^{(2)} X = A_{j,x}^{k_0} U D_{j,y}^{k_0} + D_{j,x}^{k_0} U A_{j,y}^{k_0} =: AXD+EXB$. Following~\cite[\S~4.6.2]{vandereycken_riemannian_2010-1}, a Lyapunov preconditioner can be obtained by simply dropping $E,D$, resulting in $\p^{(1)} X = AX+XB$.

\paragraph*{Numerical results}
In Figure~\ref{fig:ex1}, we compare different approaches for solving~\eqref{eq:pde_fd_matrixeq}. We used R-NLCG with rank $r=12$ employing one of the two preconditioners $\p^{(1)}$ and $\p^{(2)}$, implemented as described in \cref{sec:precond_AXXB,sec:precond_AXDEXB}, respectively. It can be seen that $\p^{(1)}$ does not lead to competitive performance relative to using $\p^{(2)}$. Using tangADI with $8$ shifts instead of $\p^{(2)}$ results in slightly slower convergence but, due to its lower cost, the execution time required to reach a small residual norm is lower. A further speedup is obtained when using rank adaptivity (RRAM), which constitutes the best choice for this example.
In RRAM the initial and update ranks are set to $r_0 = r_{\mathrm{up}} =  3$ and $\p^{(2)}$ is employed as a preconditioner.
Note that the red dots in the curves of Figure~\ref{fig:ex1} indicate rank increases of RRAM. 

We have tested CG with truncation, using fADI with the same $8$ shifts used for tangADI as a preconditioner and two different ways of low-rank truncation: (1) When choosing a low-rank truncation tolerance based on a mixed relative-absolute criterion, as described at the beginning of the section, one obtains a convergence rate similar to fixed-rank R-NLCG, confirming that its weaker theoretical foundations do not seem to impede the effectiveness of tangADI. At the same time, the ranks of the CG iterates grow quickly, leading to non-competitive time performance. (2) When capping the rank at $12$, CG is initially faster but its convergence significantly suffers from the error introduced by rank-$12$ truncations, to the extent that the method stagnates at a residual norm of $10^{-4}$, far above of what R-NLCG can attain with the same rank.

%
\begin{figure}[ht]
    \centering
    \includegraphics[width=\textwidth]{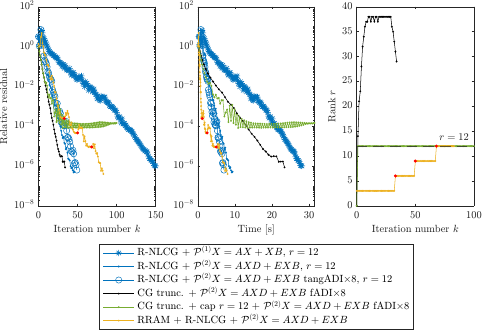}
    \caption[fig]{
    Discretized two-dimensional PDE from \cref{sec:2dpde} with $m = n = 10\,000$. Comparison of R-NLCG with fixed rank $r = 12$ and with rank adaptivity (RRAM)  as well as truncated CG with two different low-rank truncation strategies. From left to right: relative residual vs. iterations, relative residual vs. time, and rank of approximate solution vs. iterations.
    }
    \label{fig:ex1}
\end{figure}

\subsection{Stochastic Galerkin matrix equations} \label{sec:stochasticgalerkin}

We now consider a parameterized diffusion equation given by
\begin{equation}
    \label{eq:parametrized_pde_formulation}
    \begin{cases}
        -\nabla\cdot (a(x,y)\nabla u(x,y)) = f(x) & \text{in } \Omega \times \Gamma \\
        u(x,y) = 0                                & \text{on } \partial\Omega
    \end{cases}
\end{equation}
for some spatial domain $\Omega$ and the parametric domain $\Gamma = [-1,1]^q$, $q\in \mathbb N$. The parameter $a:\Omega\times\Gamma\to\R$ determining the diffusion coefficient takes the form  $a(x,y) = a_0 + \sum_{k=1}^{q}  a_k(x)\, y_k$ with $a_0 > 0$ and $ \sum_{k=1}^q \norm{a_k}_\infty < a_0$. Typically, such  
parameterized PDEs arise from a truncated Karhunen-Loève (KL) expansion of the random field in a stochastic elliptic PDE; see, e.g.,~\cite{lord_introduction_2014}.

To solve~\eqref{eq:parametrized_pde_formulation} we use stochastic Galerkin~\cite{babuska_galerkin_2004, lord_introduction_2014}, that is, we use the Galerkin method to discretize the weak formulation of~\eqref{eq:parametrized_pde_formulation} on $V^h \otimes S^p$, where $V^h$ is spanned by a finite element basis $\{\varphi_i(x)\}_{i=1}^m$ and $S^p$ contains all multivariate polynomials in $y$ of a maximal (total) degree $p$, with an $L^2$-orthonormal basis $\{\psi_j(y)\}_{j=1}^n$. This yields the multiterm matrix equation
\begin{equation}
    \label{eq:stoch_pdes_matrixeq}
    K_0X + \sum_{k=1}^{q} K_kXG_k^\top = \mathbf{f}_0\mathbf{g}_0^\top,
\end{equation}
with the matrix entries $[K_k]_{s,t}  = \int_{\Omega}  a_k\nabla\varphi_s\cdot \nabla\varphi_t$ for $k = 0,\ldots,q$
and $[G_k]_{s,t}  = \inner{y_k\psi_s}{\psi_t}$
for $k = 1,\dots, q$. The entries of the right-hand side are determined by $[\mathbf{f}_0]_{s} =\int_{\Omega} f_0\varphi_s$
and $[\mathbf{g}_0]_{s} =\inner{\psi_s}{1}$.
The locality of the finite element basis implies that the matrices $K_k$ are sparse and when choosing a Legendre basis for $S^p$, the matrices $G_k$ are also sparse.

\paragraph*{Preconditioning} The ill-conditioning of~\eqref{eq:stoch_pdes_matrixeq} is primarily caused by the stiffness matrices $K_k$ and, in turn, a simple but often effective preconditioner is obtained by simply using a constant approximation for the diffusion coefficient: $a(x,y)\approx a_0$, resulting in $\p^{(1)} X = K_0 X$. As the effectiveness of this preconditioner diminishes as the variance of $a$ increases~\cite[Theorem~3.8]{powell_block-diagonal_2009}, it can be beneficial to take more information into account. One possibility~\cite{khoromskij_tensor-structured_2011} is to average the other terms contributing to $a$, $a(x,y)\approx a_0 + \sum_{k=1}^{q}  \bar{a}_k y_k$ with $\bar{a}_j = \int_\Omega a_j(x)$, which yields the preconditioner $\p^{(2)} X = K_0 X G^\top$, with $G = I + \sum_{k=1}^{q}\frac{\bar{a}_k}{a_0} G_k$, In our preliminary experiments, this preconditioner performed very similarly to the one proposed in~\cite{Ullmann2010}. Both, $\p^{(1)}$ and $\p^{(2)}$ are incorporated into R-NLCG according to~\cref{sec:precond_EXD}.

\subsubsection{Numerical results}
We consider Examples 5.1 and 5.2 from~\cite{powell_efficient_2017}, corresponding to Test Problems 5 (TP5) and 2 (TP2) of S-IFISS \cite{bespalov_stochastic_2017}. For both problems, the spatial domain $\Omega$ is a square and $V^{h}$ contains all piecewise bilinear functions on a uniform finite element mesh with $m = 16\,129$ degrees of freedom (grid-level 7 of S-IFISS). We choose $p = 5$ for $S^p$ and obtain an orthonormal basis from tensorized Legendre polynomials in $y_1, \dots, y_q$. We include the MultiRB solver from~\cite[Algorithm~4.1]{powell_efficient_2017} in our comparison, using the implementation available at \url{https://www.dm.unibo.it/~simoncin/software.html}. For TP5 the KL expansion is truncated after $q = 9$ terms, leading to $\ell = 10$ terms in the matrix equation~\eqref{eq:stoch_pdes_matrixeq}, while for TP2 we set $q = 8$ and $\ell = 9$.

\Cref{fig:ex2_TP5} shows the results obtained for TP5, with target relative residual $\mathtt{tol} = 10^{-6}$. Due to the rapid decay of the KL expansion, it suffices to consider the simple preconditioner $\p^{(1)} X = K_0 X$. \Cref{fig:ex2_TP2} shows the results for TP2 choosing the correlation length $l = 2$ and standard deviation $\sigma = 0.3$. This turns the problem more challenging, necessitating a higher rank and the more sophisticated preconditioner $\p^{(2)} X = K_0 X G$ to achieve $\mathtt{tol} = 10^{-5}$.

For both examples, CG with truncation and fixed-rank R-NLCG exhibit similar convergence rates. Due to intermediate rank growth, CG with truncation gets more expensive in later iterations, rendering it slower than RRAM for both problems. MultiRB is significantly slower than RRAM for TP5 and slightly faster than RRAM for TP2, at the expense of a significantly larger rank. Note that for the RRAM, we employed $r_0 = 5$, $r_{\mathrm{up}} = 10$ in TP5, and $r_0 = r_{\mathrm{up}} = 30$ in TP2.


\begin{figure}[ht]
    \centering
    \includegraphics[width=\textwidth]{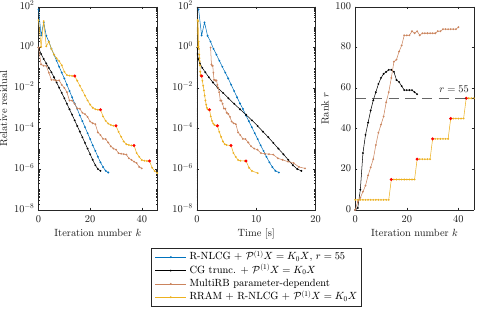}
    \caption[fig]{
        Stochastic Galerkin matrix equation from \cref{sec:stochasticgalerkin} for test problem 5 from S-IFISS ($m= 16\,129$, $n = 2\,002$, $\ell = 10$). Comparison of R-NLCG with fixed rank $r = 55$ and with rank adaptivity (RRAM) as well as truncated CG and MultiRB. 
    }
    \label{fig:ex2_TP5}
\end{figure}

\begin{figure}[ht]
    \centering
    \includegraphics[width=\textwidth]{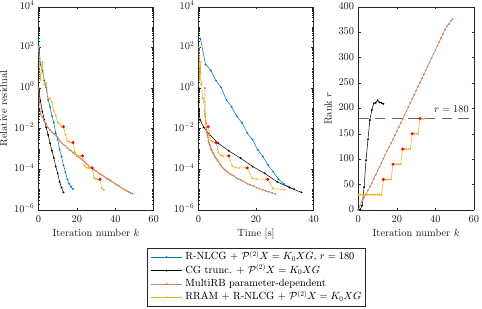}
    \caption[fig]{
        Stochastic Galerkin matrix equation from \cref{sec:stochasticgalerkin}  for test problem 2 from S-IFISS ($m= 16\,129$, $n = 1\,287$, $\ell = 9$). Comparison of R-NLCG with fixed rank $r = 180$ and with rank adaptivity (RRAM) as well as truncated CG and MultiRB. 
    }
    \label{fig:ex2_TP2}
\end{figure}

\subsection{Modified Bilinear Rail problem} \label{sec:rail}

Finally, we consider a multiterm Lyapunov equation of the form
\begin{equation} \label{eq:lyapsteel}
 (\calL-\calN) X = F, \quad \text{with}\quad \calL X= AXM+MXA, \quad \calN X = \sum_{i=1}^\ell N_i X N_i^\top,
\end{equation}
where the coefficient matrices are a modified version of those in the bilinear reformulation of the Rail example from the Oberwolfach collection \cite{benner_semi-discretized_2005}. The matrices were obtained from the M-M.E.S.S. toolbox \cite{saak_m-mess_2023}, resulting in a  multiterm linear matrix equation with size with $m = n = 5\,177$ and $\ell = 6$ terms. Adjustments were made to the constants to modify the significance of $\calN$: $\rho$ and $\gamma_k$ were divided by $10^2$, $\lambda$ and $c$ were divided by $10$, and $u_{\mathrm{ext}}$ was multiplied by $10^2$. Finally, for the right-hand side we considered $F = \tilde{B}\tilde{B}^\top$, where $\tilde{B}$ contains the first and last columns of the matrix $B$ from the M-M.E.S.S toolbox. Although these modifications may result in the loss of the original physical meaning of the equations, they yield an equation with a character that is different from the ones considered so far. In particular, the mass matrix $M$ plays a more critical role and it holds that $\rho(\calL^{-1} \calN) < 1$, which we have verified numerically. The latter implies the positive definiteness of the operator $\calL - \calN$ and that the solution inherits the symmetry and positive semidefiniteness of the right-hand-side~\cite[Lemma~5.1]{Damm2008}. This allows for performing Riemannian optimization on the manifold of fixed-rank \emph{symmetric positive semidefinite} matrices. The described Riemannian optimization tools and preconditioners described for $\calM_r$ can be easily adapted to this case; see~\cite{Bioli2024} for details.

\paragraph*{Preconditioning}
Due to the non-uniform FEM mesh used in the discretization to produce~\eqref{eq:lyapsteel}, this example features a mass matrix $M$ with a relatively high condition number ($\kappa_2(M) \approx 350$ for the chosen refinement level). As outlined in \cite[\S~8.2.3]{vandereycken_riemannian_2010-1}, this renders the Lyapunov preconditioner $\p^{(1)} X = AX+XA$, obtained from the dominant term $\calL$ by approximating $M \approx I$, less effective. We compare it with the dominant generalized Lyapunov term $\p^{(2)} X = \calL X = AXM + MXA$. 

\paragraph*{Numerical results} As an initialization strategy for Riemannian methods, we perform enough fADI steps for the generalized Lyapunov equation $AXM+M XA = \tilde{B}\tilde{B}^\top$ until we reach the desired rank. In this example, achieving a relative residual below $\mathtt{tol} = 10^{-6}$ requires a relatively high rank of $r = 150$ compared to the matrix size $n \approx 5000$. Consequently, the computational advantages of tangADI become evident. While inverting the Riemannian preconditioner exactly involves solving $4r^2+r$ linear systems of size $n$, each tangADI iteration requires solving only $r$ linear systems. Given the high rank $r = 150$, one iteration of R-NLCG with a few tangADI steps is much more efficient than using the exact inverse of the preconditioner.

The obtained results are shown in~\cref{fig:ex3}. As expected, using the preconditioner $\p^{(1)}$ instead of $\p^{(2)}$ leads to significantly higher iteration counts and longer execution times for R-NLCG. Once again, the effectiveness of tangADI is confirmed; using $8$ tangADI steps as a preconditioner does not significantly impact the R-NLCG iteration counts compared to using the exact inverse of the  preconditioner. Note that one iteration with the exact preconditioner's inverse is about $4$ times slower than the entire convergence time with tangADI.

Preconditioned CG with truncation converges in only 3 iterations without excessive rank growth, making it the best method in terms of iterations. Despite this, fixed-rank R-NLCG and RRAM with tangADI preconditioner show comparable performance in time, albeit requiring more iterations. Remarkably, even when CG with truncation performs excellently, Riemannian methods exhibit comparable time performance.

\begin{figure}[h]
    \centering
    \includegraphics[width=\textwidth]{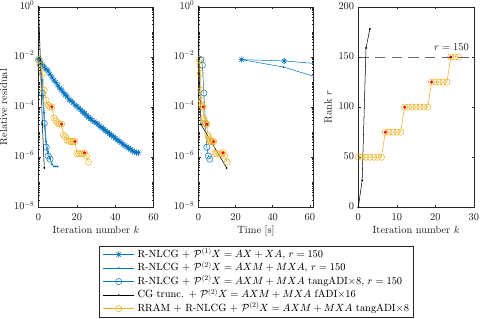}
    \caption[fig]{
        Modified Bilinear Rail problem ($n = 5\,177$) from \cref{sec:rail}.
        Comparison of R-NLCG on manifold of low-rank symmetric positive semidefinite matrices with fixed rank $r = 150$ and with rank adaptivity (RRAM) as well as truncated CG.
    }
    \label{fig:ex3}
\end{figure}

\section{Conclusions}
First-order Riemannian optimization methods lead to relatively simple low-rank solvers for multiterm matrix equations. However, preconditioning is a challenge: existing preconditioners are defined on the ambient space, which makes it difficult to incorporate them into Riemannian optimization. In this work, we have addressed this challenge with several novel preconditioning strategies. Among them, tangADI is particularly promising. Together with rank adaptivity, this leads to a new solver that is competitive with a popular iterate-and-truncate approach.
\section*{Acknowledgments}
The work of Ivan Bioli in this manuscript was carried out during his time at EPFL and the University of Pisa. 

Leonardo Robol is a member of the INdAM research group GNCS, and acknowledges support from the National Research Center in High Performance Computing, Big Data and Quantum Computing (CN1 -- Spoke 6), from the MIUR Excellence Department Project awarded to the Department of Mathematics, University of Pisa, CUP I57G22000700001, 
and from the PRIN 2022 Project ``Low-rank Structures and Numerical Methods in Matrix and Tensor Computations and their Application''.

\bibliographystyle{siam_macros/siamplain}
\bibliography{bibliography}

\end{document}